\newfont{\NUMBERS}{msbm9 scaled\magstep1}
\newcommand{\REAL}{ \mathbb{R}}
\newcommand{\REALP}{\REAL_{+}}
\newcommand{\Gap}{\mbox{DGap}}
\newcommand{\Of}[1]{\left[#1\right]}
\newcommand{\Vect}[2][]
{
  \ifthenelse{\equal{#1}{}}
  {\boldsymbol{#2}}
  {#2_{#1}}
}
\newcommand{\Matr}[2][]
{
  \ifthenelse{\equal{#1}{}}
  {\boldsymbol{#2}}
  {{#2}_{#1}}
}
\newcommand{\Pinv}[1]{{#1}^{+}}
\newcommand{\MatrPlus}[3][1={}, 2={}]{
  \ifthenelse{\equal{#1}{} \AND \equal{#2}{}}
  {\boldsymbol{#3}}
  {}
  \ifthenelse{\not{\equal{#1}{}} \AND \equal{#2}{}}
  {\boldsymbol{#3}_{#1}}
  {} 
  \ifthenelse{\equal{#1}{} \AND \not{\equal{#2}{}}}
  {\boldsymbol{#3}^{#2}}
  {}
  \ifthenelse{\not{\equal{#1}{}} \AND \not{\equal{#2}{}}}
  {#3_{#1,#2}}
  {}
}
\newcommand{\TolNonLinear}{\epsilon_{N}}
\newcommand{\TolLinearNewton}{\epsilon}
\newcommand{\LaplSymbol}{L}
\newcommand{\Lapl}[1][]{
  \ifthenelse{\equal{#1}{}}
  {\Matr{\LaplSymbol}}
  {\Matr{\LaplSymbol}[{#1}]}
}
\newcommand{\Damp}{\alpha}
\newcommand{\MinDamp}{\alpha_{0}}
\newcommand{\TolCut}{\delta}
\newcommand{\On}{\mbox{on}}
\newcommand{\Off}{\mbox{off}}
\newcommand{\Diag}[1]{\mbox{diag}\left(#1\right)}
\newcommand{\TolC}{\varepsilon_{\Matr{\Cblock}}}
\newcommand{\Dist}{\mbox{dist}}
\newcommand{\Distances}{D}
\newcommand{\Trace}[1]{\mbox{tr}(#1)}
\newcommand{\Argmin}{\operatornamewithlimits{argmin\vphantom{q}}}
\newcommand{\Argmax}{\operatornamewithlimits{argmax\vphantom{q}}}
\newcommand{\Err}{\ensuremath{\operatorname{err}}}
\newcommand{\ErrTdens}{\Err_{\Vect{\Opt{\Tdens}}}}
\newcommand{\ErrPot}{\Err_{\Vect{\Opt{\Pot}}}}
\DeclareMathOperator{\Grad}{\nabla}
\def\XXint#1#2#3{{\setbox0=\hbox{$#1{#2#3}{\int}$ }
\vcenter{\hbox{$#2#3$ }}\kern-.6\wd0}}
\newcommand{\ABS}[1]{\left|#1\right|}
\newcommand{\Dt}[1]{\partial_{t}#1}
\newcommand{\Cost}{c}
\newcommand{\Plan}{\gamma}
\newcommand{\Lyap}{\mathcal{L}}
\newcommand{\Banach}{\mathcal{X}}
\newcommand{\Ene}{\mathcal{E}}
\newcommand{\InfSupGf}{\Gamma}
\newcommand{\Dim}{d}
\newcommand{\Domain}{\Omega}
\newcommand{\tstep}{k}
\newcommand{\tstepp}{{\tstep+1}}
\newcommand{\Deltat}[1][]{
  \ifthenelse{\equal{#1}{}}
{\Delta t}
  {\Delta t_{#1}}
  }
\newcommand{\MeshPar}{h}
\newcommand{\Cell}[1][]{\ifthenelse{\equal{#1}{}}{T}{T_{#1}}}
\newcommand{\SubCell}[1][]{\ifthenelse{\equal{#1}{}}{t}{t_{#1}}}
\newcommand{\Tsymb}{\mathcal{T}}
\newcommand{\Triang}[1][]
{
  \ifthenelse{\equal{#1}{}}
  {\Tsymb}
  {\Tsymb_{#1}}
}
\newcommand{\PotOp}{\mathcal{U}}
\newcommand{\PotOf}[2][]
{
  \ifthenelse{\equal{#1}{}}
  {\Pot(#2)}
  {\Pot_{#1}(#2)}
}
\newcommand{\RadonPlus}{\mathcal{M}_{+}}
\newcommand{\Lspacechar}{L}
\newcommand{\Lspace}[1]{\Lspacechar^{#1}}
\newcommand{\Cachar}{\mathcal{C}}
\newcommand{\Cont}[1][]{
  \ifthenelse{\equal{#1}{}}
  {\Cachar} 
  {\Cachar^{#1}}
}
\newcommand{\HolderExp}{\delta}
\newcommand{\Holder}[1][]{
  \ifthenelse{\equal{#1}{}}
  {\Cachar^{\HolderExp}}
  {\Cachar^{#1}}  
}
\newcommand{\Lip}[1]
{
  \ifthenelse{\equal{#1}{}}
  {\text{Lip}}
  {\text{Lip}_{#1}}        
}
\newcommand{\Sob}[2][]{
  \ifthenelse{\equal{#2}{}}
  {H^{#1}}
  {W^{#1,#2}}  
}
\newcommand{\DHolder}[2][]
{
  \ifthenelse{\equal{#2}{}}
  {\Cachar^{#1,\HolderExp}}   
  {\Cachar^{#1,#2}}  
}
\newcommand{\VspaceSymb}{\mathcal{V}}
\newcommand{\Vspace}[1][]{
  \ifthenelse{\equal{#1}{}}
  {\VspaceSymb_{\MeshPar}}
  {\VspaceSymb_{\MeshPar}(#1)}
}
\newcommand{\VbaseSymb}{\varphi}
\newcommand{\Vbase}[1][]{
  \ifthenelse{\equal{#1}{}}
  {\VbaseSymb}
  {\VbaseSymb_{#1}}
}
\newcommand{\WspaceSymb}{\mathcal{W}}
\newcommand{\Wspace}[1][]{
  \ifthenelse{\equal{#1}{}}
  {\WspaceSymb_{\MeshPar}}
  {\WspaceSymb_{\MeshPar}(#1)}
}
\newcommand{\WbaseSymb}{\psi}
\newcommand{\Wbase}[1][]{
  \ifthenelse{\equal{#1}{}}
  {\WbaseSymb}
  {\WbaseSymb_{#1}}
}
\newcommand{\Tdens}{\mu}
\newcommand{\OptTdens}{\Tdens^*}
\newcommand{\Pot}{u}
\newcommand{\OptPot}{\Pot^*}
\newcommand{\Cond}{\Tdens}
\newcommand{\GfCond}{\Gfvar}
\newcommand{\Momentum}{y}
\newcommand{\Volt}{\Pot}
\newcommand{\Mgrad}{\Matr{G}}
\newcommand{\Lagr}{\Phi}
\newcommand{\Differential}{\nabla}
\newcommand{\Dflux}{q}
\newcommand{\Trans}{\Psi}
\newcommand{\Gfvar}{\sigma}
\newcommand{\Ingf}[1]{\tilde{#1}}
\newcommand{\Forcing}{b}
\newcommand{\Source}{\Forcing^{+}}
\newcommand{\Sink}{\Forcing^{-}}
\newcommand{\CSource}{f^{+}}
\newcommand{\CSink}{f^{-}}
\newcommand{\Kmax}{\tstep_{\max}}
\newcommand{\Rmax}{\Newi_{\max}}
\newcommand{\Path}{\mathcal{P}}
\newcommand{\Graph}{\mathcal{G}}
\newcommand{\Igraph}{j}
\newcommand{\Nodes}{\mathcal{V}}
\newcommand{\Node}{v}
\newcommand{\Inode}{i}
\newcommand{\Jnode}{j}
\newcommand{\Edges}{\mathcal{E}}
\newcommand{\Iedge}{e}
\newcommand{\Tree}{T}
\newcommand{\BallSymb}{B}
\newcommand{\Ball}[2][]
{
  \ifthenelse{\equal{#2}{}}
  {\BallSymb_{#1}}
  {\BallSymb(#2,#1)}
}
\newcommand{\Avgint}[3][]
{
  \ifthenelse{\equal{#1}{}}
  {({#3})_{#2}}
  {(#3)_{#1,#2}}
}
\newcommand{\OTP}{OTP}
\newcommand{\Newi}{r}
\newcommand{\Newii}{{r+1}}
\newcommand{\TolOpt}{\MyOpt{\epsilon}}
\newcommand{\Scal}[2]{\langle #1,#2\rangle}
\newcommand{\Norm}[2][]
{
  \ifthenelse{\equal{#1}{}}
  {{\left\lVert#2\right\rVert}}
  {{\left\lVert#2\right\rVert}_{#1}}
}
\newcommand{\Fnewtonpot}{f}
\newcommand{\Fnewtongfvar}{g}
\newcommand{\Opt}[1]{#1^{*}}
\newcommand{\MyOpt}[1]{\hat{#1}}
\newcommand{\Ablock}{\mathcal{A}}
\newcommand{\Bblock}{\mathcal{B}}
\newcommand{\Cblock}{\mathcal{C}}
\newcommand{\Zblock}{\mathcal{Z}}
\newcommand{\ZTDblock}{\mathcal{Z}_{\Vect{\Cond}}}
\newcommand{\Vblock}{\mathcal{V}}
\newcommand{\PrecW}{\mathcal{P}_\mathcal{W}}
\newcommand{\PrecV}{\mathcal{P}_\mathcal{V}}
\newcommand{\SchurMC}{\Matr{S}_{\Matr{\Cblock}}}
\newcommand{\SchurMA}{\Matr{S}_{\Matr{\Ablock}}}
\newcommand{\RhsVolt}{f}
\newcommand{\RhsGfvar}{g}
\newcommand{\Diff}{E}
\newcommand{\Wblock}{\mathcal{W}}
\newcommand{\Length}{w}
\newcommand{\Mlength}{\Matr{W}}
\newcommand{\Nnode}{n}
\newcommand{\Nedge}{m}
\newcommand{\Dgfvar}{\Gfvar}
\newcommand{\Jacgf}{\mathcal{J}}
\newtheorem{Prop}{Proposition}
\newtheorem{remark}{Remark}
\newtheorem{proof}{Proof}
\title{
  Fast Iterative Solution of the Optimal Transport Problem on
  Graphs
}
\author{
  Enrico Facca\\
  Centro di Ricerca Matematica Ennio De Giorgi,\\
  Scuola Normale Superiore,\\
  Piazza dei Cavalieri, 3, 56126 Pisa, Italy \\
  \texttt{enrico.facca@sns.it}
  \And
  Michele Benzi \\
 Scuola Normale Superiore,\\
 Piazza dei Cavalieri, 7, 56126 Pisa, Italy  \\
 \texttt{michele.benzi@sns.it}.
}
\begin{document}

\maketitle

\begin{abstract}
    In this paper, we address the numerical solution of the Optimal
  Transport Problem on undirected weighted graphs, taking the shortest
  path distance as transport cost.  The optimal solution is obtained
  from the long-time limit of the gradient descent dynamics.  Among
  different time stepping procedures for the discretization of this
  dynamics, a backward Euler time stepping scheme combined with the
  inexact Newton-Raphson method results in a robust and accurate
  approach for the solution of the Optimal Transport Problem on
  graphs.  It is found experimentally that the algorithm requires
  solving between $\mathcal{O}(1)$ and $\mathcal{O}(\Nedge^{0.36})$
  linear systems involving weighted Laplacian matrices, where $\Nedge$
  is the number of edges.  These linear systems are solved via
  algebraic multigrid methods, resulting in an efficient solver for
  the Optimal Transport Problem on graphs.

\end{abstract}

\keywords{
   optimal transport problem, 
   gradient descent, 
   implicit time-stepping scheme, 
   saddle point problem, 
   algebraic multigrid methods 
}

\section{Introduction}
\label{sec:intro}
The Optimal Transport Problem (OTP) is a type of optimization problem
where the goal is is to determine the optimal reallocation of
resources from one configuration to another.  The
OTP is also known as the Monge-Kantorovich Problem, after
Gaspard Monge, who proposed the first formulation of the problem
in~\cite{Monge:1781}, and Leonid Kantorovich, who introduced
in~\cite{Kantorovich:1942} the relaxed formulation studied nowadays.

In a rather general framework, the Monge-Kantorovich Problem can be
formulated as follows.  Consider a measurable space $\Banach$, two
non-negative measures $\CSource$ and $\CSink$ with equal mass, and a
cost function $\Cost:\Banach\times \Banach\to \REAL$ such that
$\Cost(x,y)$ describes the cost paid for transporting one unit of mass
from $x$ to $y$.  We want to find the optimal plan $\Opt{\Plan}$ in
the space of non-negative measures on the product
space $\Banach\times\Banach$ (which we denote with $\RadonPlus(\Banach
\times \Banach)$ ) that solves the following minimization problem:
\begin{equation}
  \label[prob]{eq:otp}
  \inf_{\Plan\in \RadonPlus(\Banach \times \Banach)}
  \left\{ 
    \int_{\Banach\times\Banach} \Cost(x,y) \mathrm{d}\Plan(x,y)
    \ : \
    \begin{gathered}
      \mbox{for all } A,B \mbox{ measurable subsets of } \Banach\\
      \Plan(A,\Banach) = \CSource(A) \quad
      \Plan(\Banach,B) = \CSink(B)  
    \end{gathered} 
  \right\}\;  .
\end{equation}
In recent years, a number of authors have contributed to the
 development of the \OTP\ theory (see for example
\cite{Santambrogio:2015,Ambrosio:2003,Villani:2008}), and several
reformulations of this problem have been found, together with
many connections with apparently unrelated areas.  More
recently, the mathematical ``tools'' provided by the OTP started to be
used in more applied settings. An example of application is the use of
the Wasserstein distance (which is the value obtained in the
minimization~\cref{eq:otp}) in inverse
problems~\cite{Yang-Engquist-et-al:2018,Metivier-et-al:2016}, image
processing~\cite{Lellmann-et-al:2014,Haker-et-al:2004}, and machine
learning~\cite{Arjovsky-et-al:2017}. However, the use of these
``tools'' is hindered by the high computation cost of the numerical
solution of the OTP.

In this paper we  focus on the case where the ambient
space $\Banach$ of the \OTP\ is a connected, weighted, undirected,
graph $\Graph=(\Nodes,\Edges)$ with $\Nnode$ vertices and with
$\Nedge$ edges with strictly positive weights
$\Vect{\Length}\in\REAL^{\Nedge}$.  In particular, we focus on sparse
graphs, thus $\Nedge=\mathcal{O}(\Nnode)$.  We consider the cost
function $\Cost$ given by the shortest weighted path metric on the
graph $\Graph$ which, given $\Node_{i},\Node_{j}\in \Nodes$ is defined
as
\begin{equation*}
  \Cost(\Node_{i},\Node_{j})
  =
  \Dist_{\Graph,\Vect{\Length}}(\Node_{i},\Node_{j})
  =\inf_{\Path(\Node_{i},\Node_{j})} \left\{
    \sum_{\Iedge\in \Path(\Node_{i},\Node_{j})} \Vect[\Iedge]{\Length}
    \ : \ 
    \Path(\Node_{i},\Node_{j})
    =
    \mbox{ path in $\Graph$ connecting $\Node_{i}$ with $\Node_{j}$}
  \right\},
\end{equation*}
a problem often referred to as $L^1$-\OTP.  As an
  example of application, the Wasserstein distance on graphs has been
  used in~\cite{Lin2011} to introduce an analog of Ricci curvature on
  graphs. This notion has been then used
  in~\cite{sia2019ollivier,ni2019community} in the study of community
  detection on graphs.

In the graph setting the transported measures $\CSource$ and $\CSink$
of the OTP are simply two vectors $\Vect{\Source},\Vect{\Sink}\in
\REAL_{+}^{\Nnode}=[0,+\infty[^{\Nnode}$ such that
    $\sum_{\Inode=1}^{\Nnode}\Vect[\Inode]{\Source}=\sum_{\Inode=1}^{\Nnode}\Vect[\Inode]\Sink$.
    Then \cref{eq:otp} becomes the following linear-programming
    problem:
\begin{equation}
  \label[prob]{eq:otp-graph}
  \inf_{\Matr{\Plan} \in \REALP^{\Nnode,\Nnode}} \left\{
  \Trace{\Matr{\Distances}^T\Matr{\Plan}} \ : \
    \begin{aligned}
      \Matr{\Plan} \Vect{1}
      &= 
      \Vect{\Source}\\
      \Vect{1} \Matr{\Plan}^T
      &= 
      \Vect{\Sink}
    \end{aligned}
  \right\}\;  ,
\end{equation}
where $\Matr{\Distances}$ is the distance matrix of the graph $\Graph$
i.e., the symmetric matrix whose $(i,j)$ entry is equal to
  $\Dist_{\Graph,\Vect{\Length}}(\Node_{i},\Node_{j})$.  We are now
looking for an optimal matrix $\Matr{\Opt{\Plan}}\in
\REAL^{\Nnode,\Nnode}_{+}$ (in general there may exist multiple
solutions) in which entry $\MatrPlus[i]{j}{\Opt{\Plan}}$ describes the
portion of ``mass'' optimally transported from the node $\Node_i$ to
node $\Node_j$. Unfortunately, since the number of unknowns scales
with $\mathcal{O}(\Nnode^2)$, \cref{eq:otp-graph} becomes intractable
numerically for graphs with a large number of nodes.
Moreover, in the graph setting, there is the additional cost of
computing the distance matrix $\Matr{\Distances}$.

On the other hand, when $\Cost=\Dist_{\Graph,\Vect{\Length}}$, the OTP can
be rewritten in the equivalent form of an $\ell^1$-minimal
flow problem, given by
\begin{equation}
  \label[prob]{eq:beckmann-intro}
  \min_{\Vect{\Dflux}\in\REAL^{\Nedge}} 
  \left\{
    \sum_{\Iedge=1}^{\Nedge} 
    \Vect[\Iedge]{\Length} |\Vect[\Iedge]{\Dflux}|
    \ 
    \mbox{s.t.} 
    \ 
    \Matr{\Diff} \Vect{\Dflux} = \Vect{\Source}-\Vect{\Sink}
  \right\},
\end{equation}
where $\Matr{\Diff}$ is the signed incidence matrix of the graph
$\Graph$.  This problem has long been known in the study of network
flows as the minimum-cost flow problem with no edge
capacities. Different approaches like cycle canceling, network
simplex, and the Ford-Fulkerson algorithms have been proposed to solve
it (see~\cite{ahuja1993network} for a complete overview).  More
recently, the numerical solution of~\cref{eq:beckmann-intro} has been
addressed via interior point methods in~\cite{Madry:2013}, where it is
shown that the problem can be solved on bipartite graphs by solving
$\mathcal{O}(\Nedge^{3/7\approx 0.428})$ weighted Laplacian systems.
The numerical solution of~\cref{eq:beckmann-intro} as OTP on a
graph has been studied in~\cite{Ling-et-al:2007} using the simplex
algorithm. More recently, \cite{Essid2018} considered a quadratically
regularized version of~\cref{eq:beckmann-intro} in order to deal with
the non-uniqueness of the minimal flow.

In this paper, we consider a different approach where we obtain a
solution of~\cref{eq:beckmann-intro} by means of an equivalent
formulation described in~\cite{Facca-discrete:2018}, as a minimization
problem for an energy functional
$\Lyap(\Vect{\Cond}):\REALP^{\Nedge}\to \REAL $ with
$\Vect{\Tdens}\in\REALP^{\Nedge}$. Here $\Vect{\Tdens}$ can be
interpreted as a conductivity associated to the edges of the graph.
In this paper, a minimum of $\Lyap$ is sought via a gradient
descent approach, not applied directly to the functional $\Lyap$, but
rather to its composition with the change of variable
$\Trans:\REAL^{\Nedge}\to \REALP^{\Nedge}$ that, acting
component-wise, gives
$\Vect{\Cond}=\Trans(\Vect{\Gfvar})=\Vect{\Gfvar}^2/4$. While
minimizing $\Ingf{\Lyap}=\Lyap\circ \Trans$ is clearly equivalent to
minimizing $\Lyap$, the first shows a much nicer convergence behavior.

The gradient descent approach applied to minimization of
$\Ingf{\Lyap}(\Vect{\Gfvar})$ may be seen as finding the vector
$\Vect{\Opt{\Gfvar}}=\lim_{t\to +\infty} \Vect{\Gfvar}(t)$, where
$\Vect{\Gfvar}(t)$ solves the initial value problem
\begin{equation*}
  \Dt{\Vect{\Gfvar}}(t)=-\Grad
  \Ingf{\Lyap}\left(\Vect{\Gfvar}(t)\right), \quad \Vect{\Gfvar}(0)=\Vect{\Gfvar}_0 .
\end{equation*}
We compare in terms of accuracy and efficiency three approaches for the
discretization of this ODE: the forward Euler time stepping (the
classical gradient descent algorithm), the accelerated gradient
descent method described in~\cite{Nesterov:1983}, and the backward
Euler time stepping. The last approach results in solving a sequence
on non-linear equations given by
\begin{equation*}
  \Vect{\Gfvar}^{\tstepp} = 
  \Vect{\Gfvar}^{\tstep} -
  \Deltat[\tstepp]\Grad \Ingf{\Lyap} (\Vect{\Gfvar}^{\tstepp}),
  \quad \tstep=0,1,\ldots
\end{equation*}
iterated until convergence to a steady state
configuration. The above non-linear equation is solved via an inexact
damped Newton-Raphson method, that requires the solution of a
sequence of saddle point linear systems in the form
\begin{equation*}
  \begin{pmatrix}
    \Matr{\Ablock} & \Matr{\Bblock}^T
    \\
    \Matr{\Bblock} & -\Matr{\Cblock}
  \end{pmatrix}
  \begin{pmatrix}
    \Vect{x}
    \\
    \Vect{y}
  \end{pmatrix}
  = 
  \begin{pmatrix}
     \Vect{\Fnewtonpot}
     \\ 
     \Vect{\Fnewtongfvar}
   \end{pmatrix}\;,
\end{equation*}
with $ \Matr{\Ablock},\Matr{\Bblock},\Matr{\Cblock}$ sparse matrices.
In contrast, the forward Euler and the Accelerated Gradient descent
both require the solution of only one weighted Laplacian system per
step. However, the additional cost is off-set by the much smaller
number of time steps and linear systems to be solved to reach a steady
state, hence the backward Euler method turns out to be the most efficient
among those considered.  We examine different approaches for the
solution of the saddle point linear systems. The simplest approach,
which exploits matrix $\Matr{\Cblock}$ being diagonal, turns out to be
the most efficient, even if it may require using a smaller time step
size $\Deltat[\tstep]$, thus more time steps.  In fact, it only
requires the solution of a sequence of linear systems involving
weighted Laplacian matrices, which can be solved via algebraic
multigrid methods.  Moreover, the efficiency of this approach can be
drastically improved removing along the time evolution
removing those edges (and the corresponding nodes, if
isolated) with conductivity $\Vect{\Tdens}=\Trans(\Vect{\Gfvar})$
below a given small threshold.

The resulting method has been tested on different graphs and vectors
$\Vect{\Forcing}$. In the numerical experiments where exact solutions
are available, the numerical method proposed shows a high level of
accuracy.  The total number of linear systems to be solved, which
accounts for almost all of the computation effort and which coincides
with the total number of Newton steps, has been found experimentally
to range between being constant with respect to number of edges to
scaling like $\mathcal{O}( \Nedge ^{0.36})$ on graphs generated using
the Erd\"os-R\'enyi, the Watts-Strogatz, and the Barabasi-Albert
models.

\section{OTP on graphs with cost$=\Dist_{\Graph,\Vect{\Length}}$}
\label{sec:otp-graph}

Before presenting the functional $\Lyap$ we want to
minimize, we need to introduce the equivalent formulations
of the OTP addressed in this paper. To this aim,
let us introduce some definitions and fix some
notations. First, we define the vector
\begin{equation*}
  \label{eq:forcing}
  \Vect{\Forcing}=\Vect{\Source}-\Vect{\Sink}\; ,
\end{equation*} 
which will be referred to as \emph{forcing term}.
Then,
fixing an orientation on the graph $\Graph$ (that will have
no influence in our formulation) we introduce matrices
$\Matr{\Diff}\in\REAL^{\Nedge,\Nnode}, \Mlength
\in\REAL^{\Nedge,\Nedge}$ and $\Mgrad \in
\REAL^{\Nnode,\Nedge}$ defined as
\begin{equation}
  \nonumber
  \label{eq:sign-incidence}
  \Matr[\Iedge,\Inode]{\Diff} = 
  \left\{
    \begin{aligned}
      &1  &&\mbox{ if } \Inode =\mbox{"head" of edge }\Iedge\\
      &-1 &&\mbox{ if } \Inode =\mbox{"tail" of edge }\Iedge\\
      &0  &&\mbox{ otherwise} 
    \end{aligned}
  \right.,
  \quad
  \Mlength=\Diag{\Vect{\Length}} ,
  \quad
  \Mgrad=\Mlength^{-1} \Matr{\Diff}^T.
\end{equation}
Matrix $\Matr{\Diff}$, which is the signed incidence matrix of the
graph $\Graph$, and matrix $\Mgrad$ play the role of the
(negative) divergence and gradient operators in the weighted
graphs setting.  Moreover, for any \emph{non-negative}
``conductivity'' $\Vect{\bar{\Cond}}\in \REALP^{\Nedge}$, we will
denote with
\begin{equation}
  \nonumber
  \label{eq:weighted-laplcian}
  \Lapl[\Vect{\bar{\Tdens}}]:=
  \Matr{\Diff} \Diag{\Vect{\bar{\Tdens}}} \Mgrad
\end{equation}
the $\Vect{\bar{\Cond}}$-weighted Laplacian matrix of the graph
$\Graph$, and with
\[
\Vect{\PotOp}\Of{\Vect{\bar{\Cond}}} :=
\Pinv{\Lapl[\Vect{\bar{\Cond}}]}\Vect{\Forcing},
\]
the ``potential'' induced by the ``conductivity''
$\Vect{\bar{\Cond}}$ and the forcing term $\Vect{\Forcing}$.  Here
and in the following, the symbol $\Pinv{}$ denotes the Moore-Penrose
generalized inverse. The action of the pseudo-inverse on a given
vector is found computing the minimum norm solution of the
corresponding (singular) linear system. Note that, since the graph
$\Graph$ is connected and the entries of the forcing term vector
$\Vect{\Forcing}$ sum to zero, if $\Vect{\bar{\Cond}}$ is strictly
positive then $\Vect{\PotOp}\Of{\Vect{\bar{\Cond}}}$ is always well
defined. If some entries of $\Vect{\bar{\Cond}}$ are null, the
forcing term $\Vect{\Forcing}$ must belong to range of
$\Lapl[\Vect{\bar{\Cond}}]$ for $\Vect{\PotOp}\Of{\Vect{\bar{\Cond}}}$
to be well defined.\\

\paragraph{Equivalent formulations of OTP with cost
  $\Dist_{\Graph,\Vect{\Length}}$} 
The first equivalent formulation of the OTP with cost
$\Dist_{\Graph,\Vect{\Length}}$ is given by the following minimal-cost flow 
problem:
\begin{equation}
  \label[prob]{eq:beckmann}
  \min_{\Vect{\Dflux}\in\REAL^{\Nedge}} 
  \left\{
    \Norm[1]{\Mlength \Vect{\Dflux}}
    \ 
    \mbox{s.t.} 
    \ 
    \Matr{\Diff} \Vect{\Dflux} = \Vect{\Forcing}
  \right\},
\end{equation}
where $\Norm[1]{\cdot}$ denotes the $\ell^1$-norm.  In an electrical
flow analogy, we are looking for an optimal current
$\Vect{\Opt{\Dflux}}$ that minimizes the overall total flux on the
graph, measured with a weighted $\ell^1$-norm, under the constraint
that the flux satisfies the Kirchhoff laws $\Matr{\Diff} \Vect{\Dflux}
=\Vect{\Forcing}$.

The second equivalent problem, which is the dual
of \cref{eq:beckmann}, is the
following maximization problem:
\begin{equation}
  \label[prob]{eq:dual}
  \max_{\Vect{\Volt}\in\REAL^{\Nnode}} 
  \left\{
    \Vect{\Forcing}^T
    \Vect{\Volt}
    \ 
    \mbox{s.t.} 
    \ 
    \Norm[\infty]{\Mgrad \Vect{\Volt}}
    \leq 1
\right\},
\end{equation}
where $\Norm[\infty]{\cdot}$ denotes the $\ell^{\infty}$-norm.  This
last problem is actually a simplified version of the dual problem of
the primal~\cref{eq:otp-graph} (see~\cite{Santambrogio:2015} for the
detailed proof). \Cref{eq:beckmann,eq:dual} may have multiple
solutions.

A third equivalent formulation, which subsumes
\cref{eq:beckmann,eq:dual}, and which will be the one we actually
solve in this paper, can be introduced by
rewriting~\cref{eq:dual} as follows:
\begin{align}
  \label{eq:dual-original}
  &\max_{\Vect{\Volt}\in\REAL^{\Nnode}} 
  \left\{
    \Vect{\Forcing}^T
    \Vect{\Volt}
    \ 
    \mbox{s.t.} 
    \
    |(\Mgrad \Vect{\Volt})_{\Iedge}|\leq 1 
    \ 
    \forall \Iedge=1,\ldots,\Nedge
    \right\}
    \; ,
  \\
  \label{eq:dual-transformed}
  \equiv
  &\max_{\Vect{\Volt}\in\REAL^{\Nnode}} 
  \left\{
  \Vect{\Forcing}^T
  \Vect{\Volt}
  \ 
  \mbox{s.t.} 
  \
  \frac{\Vect[\Iedge]{\Length}}{2}
  \left(|(\Mgrad \Vect{\Volt})_{\Iedge}|^2-1\right)\leq 0 
  \ 
  \forall \Iedge=1,\ldots,\Nedge
  \right\}
  \; ,
\end{align}
where we first square both sides of the inequality constraints
in~\cref{eq:dual-original}, and then we multiply by the positive
factor $\Vect[\Iedge]{\Length}/2$ (the reason for these
transformations will be clear later).  Now, we can introduce a
non-negative Lagrange multiplier $\Vect{\Tdens}\in \REALP^\Nedge$ for
the inequality constraints in~\cref{eq:dual-transformed}, so that
\cref{eq:dual,eq:dual-transformed} become equivalent to the following
inf-sup problem:
\begin{equation}
  \label[prob]{eq:min-max}
  \inf_{\Vect{\Cond}\in\REALP^\Nedge}\sup_{\Vect{\Volt}\in\REAL^{\Nnode}}
  \Lagr(\Vect{\Volt},\Vect{\Cond}) :=  
  \Vect{\Forcing}^T \Vect{\Volt}  
  - \frac{1}{2}\Vect{\Volt}^T \Lapl[\Vect{\Tdens}] \Vect{\Volt} 
  + 
  \frac{1}{2} \Vect{\Cond}^{T}  \Vect{\Length}
  \; ,
\end{equation}
with Lagrangian $\Lagr$.
If we now introduce the functional
\begin{equation}
  \nonumber
  \label{eq:joule}
  \Ene(\Vect{\Cond}) 
  =
  \sup_{\Vect{\Volt} \in \REAL^{\Nnode} } 
  \left( 
    \Vect{\Forcing}^T \Vect{\Volt}  
    - \frac{1}{2}\Vect{\Volt}^T \Lapl[\Vect{\Tdens}] \Vect{\Volt} )
  \right)
  =
  \frac{1}{2}\Vect{\Forcing}^T\Vect{\PotOp}\Of{\Vect{\Cond}}
  =
  \frac{1}{2}
  \Vect{\Forcing}^T
  \Pinv{\Lapl}[\Vect{\Tdens}]
  \Vect{\Forcing}\; ,
\end{equation}
which is the Joule dissipated energy associated to $\Vect{\Cond}$, we
may look at~\cref{eq:min-max} only in terms of the variable
$\Vect{\Cond}$. Hence, we want to find the optimal
$\Vect{\Opt{\Cond}}$ solving the following minimization problem:
\begin{equation}
  \label[prob]{eq:min-lyap}
  \min_{\Vect{\Cond}\in\REALP^\Nedge}
  \left\{
    \Lyap(\Vect{\Cond})   
    =
    \Ene(\Vect{\Cond})
    + 
    \frac{1}{2}   \Vect{\Length}^{T}\Vect{\Cond}
  \right\} \; ,
\end{equation}
and then setting
$\Vect{\Opt{\Pot}}=\Vect{\PotOp}\Of{\Vect{\Opt{\Cond}}}$.  Again, in
the electrical network analogy where $\Vect{\Source}$ and
$\Vect{\Sink}$ describe the electrical charge distributions, the above
problem amounts to finding the optimal conductivity
$\Vect{\Opt{\Cond}}$ giving the best trade-off between the Joule
dissipated energy $\Ene(\Vect{\Cond})$ and the ``infrastructure cost''
$\frac{1}{2} \Vect{\Cond}^{T} \Vect{\Length}$. \Cref{eq:min-max} is
actually a min-max problem since functional $ \Lyap(\Vect{\Cond})$
admits a minimum $\Vect{\Opt{\Cond}}$, being convex and since
$\lim_{\Norm{\Vect{\Cond}}\to
  +\infty}\Lyap(\Vect{\Cond})=+\infty$. Then, given a minimizer
$\Vect{\Opt{\Cond}}$, such maximization of the Lagrangian $\Lagr$ of
\cref{eq:min-max} is equivalent to finding
$\Vect{\Opt{\Volt}}=\Vect{\PotOp}\Of{\Vect{\Opt{\Cond}}}$.  Any vector
$\Vect{\Opt{\Volt}}$  also solves~\cref{eq:dual}.

The relation among the different optimization problems presented in
this section are described in the following proposition.
\begin{Prop}
  \label{prop:equiv}
  \Cref{eq:beckmann,eq:dual,eq:min-lyap} are equivalent. This means
  that the following equalities hold:
  \begin{equation}
    \nonumber
    \label{eq:graph-otp}
    \max_{\Vect{\Volt}\in\REAL^{\Nnode}}
    \left\{
    \Vect{\Forcing}^T\Vect{\Pot}
    \ : \ 
    \Norm{\Mgrad\Vect{\Volt}}_{\infty}\leq 1
    \right\}
    =    
    \min_{\Vect{\Dflux}\in\REAL^{\Nedge}}
    \left\{
    \Norm[1]{\Mlength \Vect{\Dflux}}
    \ :\ 
    \Matr{\Diff}\Vect{\Dflux}=\Vect{\Forcing}
    \right\}
    =    
    \min_{\Vect{\Tdens}\in\REAL^{\Nedge}\geq 0} 
    \Lyap(\Vect{\Tdens})
    \; .
  \end{equation}
  Moreover, given a solution $\Opt{\Vect{\Cond}}$ that minimizes the
  functional $\Lyap$ we can recover a maximizer $\Opt{\Vect{\Volt}}$
  of~\cref{eq:dual} by solving
  \begin{subequations}
    \label{eq:bp}
    \begin{align}
      &  \Lapl[\Vect{\Opt{\Cond}}] \Vect{\Opt{\Volt}}=\Vect{\Forcing} 
      \label{eq:bp-div}
      \; ,
      \\
      &|(\Mgrad \Vect{\Opt{\Volt}})_{\Iedge}|\leq 1 
      \quad \forall \Iedge =1,\ldots, \Nedge
      \label{eq:bp-bound}
      \; ,\\
      &|(\Mgrad \Vect{\Opt{\Volt}})_{\Iedge}|=1 
      \quad \Opt{\Vect[\Iedge]{\Cond}}>0
      \label{eq:bp-eikonal}
    \end{align}
  \end{subequations}
  (the KKT equations for~\cref{eq:dual}).
  A minimizer $\Opt{\Vect{\Dflux}}$
  of~\cref{eq:beckmann} can be represented in the form
  \begin{equation}
    \label{eq:optdflux}
    \Opt{\Vect{\Dflux}} 
    =
    \Diag{\Opt{\Vect{\Cond}}}\Mgrad\Opt{\Vect{\Volt}}ù
    \; .
  \end{equation}
  Moreover, the following equality holds:
  \begin{equation}
    \nonumber
    \label{eq:discrete-minima}
    \Vect{\Opt{\Cond}}=|\Opt{\Vect{\Dflux}}|.
  \end{equation}
\end{Prop}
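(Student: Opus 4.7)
The plan is to chain together three dualities and then read off the KKT conditions.

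First, I would establish the equivalence $\text{(\ref{eq:beckmann})}\Leftrightarrow\text{(\ref{eq:dual})}$ by standard linear programming (or Fenchel--Rockafellar) duality. Writing (\ref{eq:beckmann}) as a linear program with variables $\Vect{\Dflux}^{+},\Vect{\Dflux}^{-}\geq 0$ such that $\Vect{\Dflux}=\Vect{\Dflux}^{+}-\Vect{\Dflux}^{-}$ makes the LP-dual immediate: the dual variable $\Vect{\Volt}\in\REAL^{\Nnode}$ is associated to the equality $\Matr{\Diff}\Vect{\Dflux}=\Vect{\Forcing}$, and the sign constraints yield exactly $\Norm[\infty]{\Mgrad \Vect{\Volt}}\leq 1$ (recalling $\Mgrad=\Mlength^{-1}\Matr{\Diff}^{T}$). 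Strong duality holds since the primal is finite and feasible (any spanning-tree flow works, because $\Vect{\Forcing}$ has zero sum and $\Graph$ is connected).

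Next, I would connect (\ref{eq:dual}) to (\ref{eq:min-lyap}) through the saddle-point formulation (\ref{eq:min-max}). Squaring the constraints $|(\Mgrad\Vect{\Volt})_{\Iedge}|\leq 1$ is harmless because both sides are nonnegative, and multiplying by $\Vect[\Iedge]{\Length}/2>0$ does not change the feasible set. Introducing non-negative multipliers $\Vect{\Cond}\in\REALP^{\Nedge}$ for the squared constraints yields the Lagrangian $\Lagr(\Vect{\Volt},\Vect{\Cond})$ of (\ref{eq:min-max}), whose inner sup over $\Vect{\Volt}$ is a concave quadratic maximization; its optimal value is exactly the Joule energy $\Ene(\Vect{\Cond})=\tfrac{1}{2}\Vect{\Forcing}^{T}\Pinv{\Lapl[\Vect{\Cond}]}\Vect{\Forcing}$ when $\Vect{\Forcing}\in\mathrm{Range}(\Lapl[\Vect{\Cond}])$ (and $+\infty$ otherwise), so $\inf_{\Vect{\Cond}\geq 0}\sup_{\Vect{\Volt}}\Lagr=\min_{\Vect{\Cond}\geq 0}\Lyap(\Vect{\Cond})$. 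The main technical point here is to justify the min--max exchange: Slater's condition is satisfied by $\Vect{\Volt}=0$, so strong duality gives $\sup_{\Vect{\Volt}}\inf_{\Vect{\Cond}\geq 0}\Lagr=\inf_{\Vect{\Cond}\geq 0}\sup_{\Vect{\Volt}}\Lagr$, and the outer sup over $\Vect{\Volt}$ recovers the constrained problem (\ref{eq:dual-transformed})$\equiv$(\ref{eq:dual}). This establishes the three-way equality of optimal values.

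Third, I would derive (\ref{eq:bp}) as the KKT system of (\ref{eq:dual-transformed}) at the saddle point $(\Vect{\Opt{\Volt}},\Vect{\Opt{\Cond}})$. Stationarity of $\Lagr$ in $\Vect{\Volt}$ gives $\Vect{\Forcing}=\Lapl[\Vect{\Opt{\Cond}}]\Vect{\Opt{\Volt}}$, which is (\ref{eq:bp-div}); primal feasibility is (\ref{eq:bp-bound}); complementary slackness, $\Opt{\Vect[\Iedge]{\Cond}}\cdot\tfrac{\Vect[\Iedge]{\Length}}{2}\!\bigl(|(\Mgrad\Vect{\Opt{\Volt}})_{\Iedge}|^{2}-1\bigr)=0$, together with $\Vect[\Iedge]{\Length}>0$, yields (\ref{eq:bp-eikonal}). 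Equivalently, this system can be read as the first-order optimality conditions for $\Lyap$, which I expect to be the most delicate point since $\Ene$ is only well-defined/sub-differentiable where $\Vect{\Forcing}\in\mathrm{Range}(\Lapl[\Vect{\Cond}])$; handling edges with $\Opt{\Vect[\Iedge]{\Cond}}=0$ and the singularity of $\Lapl[\Vect{\Opt{\Cond}}]$ via the Moore--Penrose inverse is the main obstacle.

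Finally, to recover the primal optimizer, I would set $\Opt{\Vect{\Dflux}}:=\Diag{\Opt{\Vect{\Cond}}}\Mgrad\Opt{\Vect{\Volt}}$ as in (\ref{eq:optdflux}). Then $\Matr{\Diff}\Opt{\Vect{\Dflux}}=\Lapl[\Vect{\Opt{\Cond}}]\Vect{\Opt{\Volt}}=\Vect{\Forcing}$ by (\ref{eq:bp-div}), so $\Opt{\Vect{\Dflux}}$ is feasible for (\ref{eq:beckmann}). Using (\ref{eq:bp-eikonal}), on every edge where $\Opt{\Vect[\Iedge]{\Cond}}>0$ we get $|\Opt{\Vect[\Iedge]{\Dflux}}|=\Opt{\Vect[\Iedge]{\Cond}}\,|(\Mgrad\Vect{\Opt{\Volt}})_{\Iedge}|=\Opt{\Vect[\Iedge]{\Cond}}$, and on edges with $\Opt{\Vect[\Iedge]{\Cond}}=0$ both sides vanish; hence $|\Opt{\Vect{\Dflux}}|=\Opt{\Vect{\Cond}}$ componentwise, and $\Norm[1]{\Mlength\Opt{\Vect{\Dflux}}}=\Vect{\Length}^{T}\Opt{\Vect{\Cond}}$. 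Combining this with $\Vect{\Forcing}^{T}\Vect{\Opt{\Volt}}=\Vect{\Opt{\Volt}}^{T}\Lapl[\Vect{\Opt{\Cond}}]\Vect{\Opt{\Volt}}=\sum_{\Iedge}\Vect[\Iedge]{\Length}\,\Opt{\Vect[\Iedge]{\Cond}}|(\Mgrad\Vect{\Opt{\Volt}})_{\Iedge}|^{2}=\Vect{\Length}^{T}\Opt{\Vect{\Cond}}$ gives matching primal and dual objective values, certifying both optimality of $\Opt{\Vect{\Dflux}}$ in (\ref{eq:beckmann}) and $\Vect{\Opt{\Volt}}$ in (\ref{eq:dual}), and closing the three-way equivalence.
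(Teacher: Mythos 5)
Your proof is correct and takes essentially the same route the paper indicates: standard LP/Fenchel duality for the equivalence of \cref{eq:beckmann,eq:dual}, the Lagrangian saddle-point construction of \cref{eq:min-max} (with Slater justifying the min--max exchange) for the equivalence with $\min\Lyap$, the KKT system for \cref{eq:bp}, and a zero-duality-gap certification that is exactly the mechanism of~\cref{eq:duality-gap}. The only difference is that you write out in full the details the paper delegates to its references (Ekeland--Temam for the first duality, and the earlier work of Facca et al.\ for the equivalence with the minimization of $\Lyap$).
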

The proof of the equivalence between~\cref{eq:beckmann,eq:dual} is a
straightforward application of standard duality results
(see~\cite[Section 4, Chapter3]{Ekeland-Teman:1999}).  We refer the
reader to~\cite{Facca-discrete:2018} for the proof of the equivalence
with the problem of minimizing $\Lyap$.  The above proposition states that any optimal flux
$\Vect{\Opt{\Dflux}}$ is induced by the gradient of the potential
$\Vect{\Opt{\Volt}}$, multiplied by the optimal conductivity
$\Vect{\Opt{\Cond}}$.  Note that, for any pair
$(\Vect{\Dflux},\Vect{\Pot})\in \REAL^{\Nedge}\times \REAL^{\Nnode}$
satisfying the constraints $\Matr{\Diff}\Vect{\Dflux}=\Vect{\Forcing}$
and $\Norm{\Mgrad\Vect{\Volt}}_{\infty}\leq 1$, the following
inequality holds:
\begin{equation}
  \label{eq:duality-gap}
  \Gap(\Vect{\Dflux},\Vect{\Pot}):=\Norm[1]{\Mlength \Vect{\Dflux}}- \Vect{\Forcing}^T\Vect{\Pot}\geq 0\; ,
\end{equation}
where equality guarantees the optimality of
$(\Vect{\Dflux},\Vect{\Pot})\in \REAL^{\Nedge}\times \REAL^{\Nnode}$.
The quantity $\Gap(\Vect{\Dflux},\Vect{\Pot})$ is called \emph{Duality
  gap}.

It is important to note that any optimal solution $\Vect{\Opt{\Tdens
}}$ may be zero on several edges, and there may be nodes where the
corresponding rows of the matrix $\Lapl[\Vect{\Opt{\Cond}}]$ is null,
leading to a singular but compatible system of equations. This means
that there may exist a large null-space of $\Lapl[\Vect{\Opt{\Cond}}]$
that may be added to any admissible solution $\Opt{\Pot}$ (an optimal
potential), as long as the constraints in~\cref{eq:bp-bound} are
satisfied.  However, it is easy to identify a base for this subspace,
given by the ``support'' vectors of the node ``surrounded'' by edges
with zero conductivity.  Uniqueness of the solution of the primal
problem may fail even in case when the dual admits a unique (up to a
constant) solution (for example, see Test-case~1
in~\cref{sec:numerical-experiments}).

All the discrete problems mentioned in this section find
their counterpart in the OTP posed in a domain
$\Domain\subset \REAL^{\Dim}$ with cost equal to the
Euclidean distance.  We refer to~\cite{Facca-discrete:2018}
for a detailed description of these connections between discrete
and continuous OT problems.

\section{ OTP solution via dynamical approaches}
\label{sec:otp-physarum}
Among the different formulations of the OTP presented in the previous
section, in this paper we will address the problem of finding a
solution $\Vect{\Opt{\Cond}}$ of the
minimization~\cref{eq:min-lyap}. The solution $\Vect{\Opt{\Volt}}$
of~\cref{eq:dual} will be derived from the equivalent min-max
\cref{eq:min-max}, while the $\Vect{\Opt{\Dflux}}$ solution
of~\cref{eq:beckmann} will be given by~\cref{eq:optdflux}.

In this paper, the problem of minimizing $\Lyap$ is solved via
a gradient descent approach, where we introduced a one-to-one
transformation $\Trans: \REAL^{\Nedge} \to \REALP^{\Nedge}$ , acting
component-wise, with
\begin{equation}
\nonumber      
\label{eq:trans}
\Vect[\Iedge]\Cond=\Trans(\Vect[\Iedge]{\GfCond})
\;.
\end{equation}
It is clear that any minimum $\Vect{\Opt{\Cond}}$ of the functional
$\Lyap$ can be recovered from any minimum $\Vect{\Opt{\GfCond}}$ of
the composed functional
\begin{equation}
  \nonumber
  \label{eq:lyap-gfvar}
  \Ingf{\Lyap}(\Vect{\GfCond})=\Lyap(\Trans(\Vect{\GfCond}))=
  \frac{1}{2}\left(
  \Vect{\Forcing}^T\Pinv{\Lapl}[\Trans(\Vect{\Gfvar})]\Vect{\Forcing}+
  \Vect{\Length}^T\Trans(\Vect{\Gfvar}) \right)
  \; ,
\end{equation}
and vice-versa. We now differentiate with respect to
$\Vect[\Iedge]{\Gfvar}$ the functional
$\Ingf{\Lyap}(\Vect{\Gfvar})$, obtaining the following expression:
\begin{equation*}
  \partial_{\Vect[\Iedge]{\Gfvar}}
  \Ingf{\Lyap}\left(\Vect{\Gfvar}\right)
  =
  \partial_{\Vect[\Iedge]{\Tdens}}
  \left(
  \Lyap
  \left(
  \Vect{\Tdens}
  \right)
  \right)
  \Trans'(\Vect[\Iedge]{\Gfvar})
  =
  \frac{1}{2}
  \left(
  \Vect{\Forcing}^{T}
  \partial_{\Vect[\Iedge]{\Tdens}}
  \left(
  \Pinv{\Lapl}[\Vect{\Tdens}]
  \right)
  \Vect{\Forcing}
  +
  \Vect[\Iedge]{\Length}
  \right)
  \Trans'(\Vect[\Iedge]{\Gfvar})
  \;.
\end{equation*}
Using the following matrix identity (see~\cite[Theorem 4.3.]{GoPe:1973})
\begin{multline*}
  \partial_{\Vect[\Iedge]{\Tdens}}
  \left(
  \Pinv{\Lapl}[\Vect{\Tdens}]
  \right) =
  -
  \Pinv{\Lapl}[\Vect{\Tdens}]
  \partial_{\Vect[\Iedge]{\Tdens}}
  \left(\Lapl[\Vect{\Tdens}]\right)
  \Pinv{\Lapl}[\Vect{\Tdens}]
  +
  \Pinv{\Lapl}[\Vect{\Tdens}]
  \Pinv{\Lapl}[\Vect{\Tdens}]^{T}
  \partial_{\Vect[\Iedge]{\Tdens}}
  \left(\Lapl^T[\Vect{\Tdens}]\right)  
  (\Matr{I}-\Lapl[\Vect{\Tdens}] \Pinv{\Lapl}[\Vect{\Tdens}])
  \\
  +
  (\Matr{I}-\Pinv{\Lapl}[\Vect{\Tdens}]\Lapl[\Vect{\Tdens}] )
  \partial_{\Vect[\Iedge]{\Tdens}}
  \left(\Lapl^T[\Vect{\Tdens}]\right)
  \Pinv{\Lapl}[\Vect{\Tdens}]^{T}
  \Pinv{\Lapl}[\Vect{\Tdens}]\;,
\end{multline*}
and the fact that $\left(\Matr{I}-\Lapl[\Vect{\Tdens}]
\Pinv{\Lapl}[\Vect{\Tdens}]\right)\Vect{\Forcing}=\Vect{0}$, we obtain
the following equation for the gradient of the functional
$\Ingf{\Lyap}(\Vect{\Gfvar})$:
\begin{equation}
  \label{eq:gradient-lyap}
  \Grad \Ingf{\Lyap}\left(\Vect{\Gfvar}\right) = \Vect{\Length}
  \odot \Trans'\left(\Vect{\Gfvar}\right) \odot \frac{1}{2} \left(
  -\left(\Mgrad
  \Vect{\PotOp}\Of{\Trans(\Vect{\Gfvar})}\right)^2+\Vect{1}
  \right)\; . 
\end{equation}
Similar computations in the evaluation of the gradient of
$\Lyap(\Vect{\Tdens})$ can be found
in~\cite{Facca-discrete:2018,Bonifaci:2019}.

In this paper, we use the transformation
\begin{equation}
  \label{eq:trans-square}
  \Trans(\Vect{\Gfvar})=\frac{\Vect{\Gfvar}^2}{4}\;,
\end{equation}
and we set the gradient descent dynamics in the space $\REAL^{\Nedge}$
endowed with the scalar product
$\Scal{\Vect{x}}{\Vect{y}}_{\Vect{\Length}} :=
\Vect{x}^{T}\Diag{\Vect{\Length}}\Vect{y}$ and the norm
$\Norm[\Vect{\Length}]{x}:=\sqrt{\Scal{\Vect{x}}{\Vect{x}}_{\Vect{\Length}}}$,
which is introduced to remove the vector $\Vect{\Length}$ from the
right-hand side of~\cref{eq:gradient-lyap}.  Given
$\Vect{\Gfvar}_0>0$, the corresponding initial value problem is
given by
\begin {subequations}
  \label{eq:gf-ode} 
  \begin{align}
    \label{eq:gf-ode-dyn}
    \Dt{\Vect{\Gfvar}(t)}
    &=
    -\Differential_{\Vect{\Gfvar}} \Ingf{\Lyap}(\Vect{\Gfvar})
    =
    \Vect{\Gfvar}(t)
    \odot
    \frac{1}{4}
    \left(
    \left(\Mgrad \Vect{\PotOp}\Of{\Trans(\Vect{\Gfvar})}\right)^2-\Vect{1}
    \right)
    \; ,
    \\
    \label{eq:pp-ode-bc}
    \Vect{\Gfvar}(0)&=\Vect{\Gfvar}_0
    \; .
  \end{align}
  \end{subequations}
A minimizer $\Vect{\Opt{\Gfvar}}$ of functional
$\Ingf{\Lyap}(\Vect{\Gfvar})$ is sought as the long-time limit
of the solution $\Vect{\Gfvar}(t)$ of~\cref{eq:gf-ode}.  Solutions
of~\cref{eq:min-lyap,eq:bp,eq:beckmann} are then given by 
\[
\Vect{\Opt{\Cond}}=\Trans(\Vect{\Opt{\Gfvar}}), 
\quad 
\Vect{\Opt{\Pot}}=\Vect{\PotOp}\Of{\Vect{\Opt{\Cond}}}
\quad 
\Vect{\Opt{\Dflux}}=\Vect{\Opt{\Cond}}\odot \Mgrad \Vect{\Opt{\Pot}}.
\]

The change of variable $\Trans$ in~\cref{eq:trans-square} is
introduced in order to recover the following dynamical system:
\begin {subequations}
  \label{eq:pp-ode} 
  \begin{align}
    \label{eq:pp-ode-dyn}
    &
    \Dt{\Vect{\Cond}}(t)= 
    \Vect{\Cond}(t)
    \odot
    \left(
    \left( \Mgrad \Vect{\PotOp}\Of{\Vect{\Cond}(t)}\right)^2
    -\Vect{1}
    \right)
    =
    2
    \Vect{\Length}^{-1}
    \odot
    \Vect{\Cond}(t)
    \odot
    \left(-\Grad_{\Vect{\Cond}}\Lyap(\Vect{\Cond}(t))\right)
    \; ,
    \\
    \label{eq:pp-ode-bc}
    &
    \Vect{\Cond}(0)=\Vect{\Cond}_0>\Vect{0}.
  \end{align}
\end{subequations}
The steady state solution
$\Vect{\Opt{\Cond}}=\lim_{t\to+\infty}\Vect{\Cond}(t)$
of~\cref{eq:pp-ode} and its variants have been related to different
types of optimization problems, like the shortest path
problem~\cite{Tero-et-al:2007, bonifaci:physarum}, the Basis Pursuit
Problem~\cite{Facca-discrete:2018,Bonifaci:2019} and different
transport problems in a continuous setting
\cite{Facca-et-al:2018,Facca-et-al-branch:2018}.
In~\cite{Bonifaci:2019} it has been shown that the dynamical system
in~\cref{eq:pp-ode} can be interpreted as mirror descent dynamics for
the functional $\Lyap$, where the descent direction is given by
  the product of the vector
  $-\Grad_{\Vect{\Cond}}\Lyap(\Vect{\Cond}(t))$ and the vector
  $\Vect{\Tdens}(t)$ (the factor $2\Vect{\Length}^{-1}$ has no
  particular influence on the dynamics).  Due to the presence of the
  multiplicative term $\Vect{\Tdens}(t)$, the right-hand side
  of~\cref{eq:pp-ode-dyn} tends to zero on those entries where
  $\Vect{\Tdens}(t)$ tends to zero.  The numerical results presented
  in the above mentioned papers suggest that this scaling plays a
  crucial role in the good behavior of the numerical methods based on
  the dynamical system~\cref{eq:pp-ode}. However, using the
  transformation in~\cref{eq:trans-square} we can
  reinterpret~\cref{eq:pp-ode} as a ``classical'' gradient descent
  dynamics.  In fact, if we multiply
  component-wise~\cref{eq:gf-ode-dyn} by $\frac{1}{2}
  \Vect{\Gfvar}(t)$, we obtain
\begin{align*}
  \Dt{\Vect{\Cond}(t)}
  =
  \frac{1}{2}\Vect{\Gfvar}(t)
  \odot
  \Dt{\Vect{\Gfvar}(t)}
  =
  \frac{1}{2}
  \Vect{\Cond}(t)
  \odot 
  \left(
    (\Mgrad \Vect{\PotOp}\Of{\Trans(\Vect{\Gfvar})})^2-\Vect{1}
  \right)\; ,
\end{align*}
which is equal to~\cref{eq:pp-ode-dyn}.  Nevertheless,
in~\cref{eq:gf-ode-dyn}, we keep scaling the gradient direction
$\left( \Mgrad\Vect{\PotOp}\Of{\Vect{\Cond}(t)}\right)^2
-\Vect{1}$ by a multiplicative factor $\Vect{\Gfvar}(t)$
which tends to zero on the same entries of $\Vect{\Tdens}(t)$, since
$\Vect{\Gfvar}(t)=\sqrt{4\Vect{\Tdens}(t)}$.

\begin{remark}
  Other transformations, like 
  \begin{equation*}
    \Trans(\Vect{\Gfvar})=\Vect{\Gfvar}^{p} \quad p\geq 1 \quad \mbox{and}\quad  \Trans(\Vect{\Gfvar})=e^{\Vect{\Gfvar}}\; ,
  \end{equation*}
  have been considered in our experiments. However, in this paper we
  consider only the transformation in~\cref{eq:trans-square}, since
  it gives the best results in
  terms of global efficiency and robustness.
\end{remark}



\section{Numerical Solution}
\label{sec:numerical-solution}
\subsection{Time Discretization}
\label{sec:time-discretization}
In this section, we describe three approaches for the
time-discretization of~\cref{eq:gf-ode} and to get a steady-state
solution, i.e. we show how to build an approximate sequence
$(\Vect{\Gfvar}^{\tstep})_{\tstep=1,\ldots,\Kmax}$ such that
\begin{equation}
  \label{eq:steady-state}
  \Norm[\Vect{\Length}]{
    \Grad_{\Vect{\Gfvar}} \Ingf{\Lyap}(\Vect{\Gfvar}^{\tstep})
  }
  =
  \Norm[2]{
    \sqrt{\Vect{\Length}}
    \odot
    \Vect{\Gfvar}^{\tstep}
    \odot
    \frac{1}{2}
    (
    \ABS{\Mgrad \Vect{\Pot}^{\tstep}}^2-\Vect{1}
    )
  }
  \leq \TolOpt,
\end{equation}
where $\TolOpt>0$ is a fixed tolerance.  We denote with
$\MyOpt{\tstep}$ the step at which~\cref{eq:steady-state} is
satisfied, writing
$(\Vect{\MyOpt{\Gfvar}},\Vect{\MyOpt{\Pot}})=(\Vect{\Gfvar}^{\MyOpt{\tstep}},\Vect{\Pot}^{\MyOpt{\tstep}})$.
Our approximate solutions of~\cref{eq:beckmann,eq:dual,eq:min-lyap}
will be given, respectively, by
\begin{equation*}
  (\Vect{\MyOpt{\Dflux}}=  \Vect{\MyOpt{\Cond}} \odot \Mgrad \Vect{\MyOpt{\Pot}}, \Vect{\MyOpt{\Pot}},\Vect{\MyOpt{\Cond}})
  \quad
  \mbox{with}
  \quad
  \Vect{\MyOpt{\Cond}}=
  \Trans(\Vect{\MyOpt{\Gfvar}})
  \; .
\end{equation*}

\paragraph{Forward Euler/Gradient Descent}
The first approach is the forward Euler time-stepping, resulting in
the classical \emph{Gradient Descent} (GD) algorithm, where, given a
sequence $\Deltat[\tstep]>0$, the approximation sequence
$(\Vect{\Gfvar}^{\tstep})_{\tstep=1,\ldots,\Kmax}$ is given by the
following equation:
\begin{equation}
  \nonumber
  \label{eq:ee}
  \Vect{\Gfvar}^{\tstepp} = \Vect{\Gfvar}^{\tstep} + \Deltat[\tstep]
  \frac{\Vect{\Gfvar}^{\tstep}}{4} \odot \left( ( \Mgrad
  \Vect{\PotOp}\Of{\Trans(\Vect{\Gfvar}^{\tstep})})^2 -\Vect{1}
  \right),\quad
  \tstep=1,\ldots,\Kmax
  \; .
\end{equation}
At each time step, we only need to solve the linear system
\[
\Lapl[\Vect{\Tdens}^{\tstep}]\Vect{\Pot}^{\tstep} =\Vect{\Forcing}.
\]
A similar discretization scheme, applied to~\cref{eq:pp-ode}, has been
successfully adopted
in~\cite{Bonifaci:2019,Facca-et-al:2018,Facca-discrete:2018,
  Facca-et-al-numeric:2020, FACCA2020184}.\\

\paragraph{Accelerated Gradient Descent}
The second approach considered in this paper is the \emph{Accelerated
  Gradient Descent (AGD)}.  In the AGD approach, first introduced
in~\cite{Nesterov:1983}, the approximating sequence is generated as
follows:
\begin{align*}
  \Vect{\Gfvar}^{\tstepp}  &= \Vect{\Momentum}^{\tstep} -
  \Deltat[\tstep]
  \Grad_{\Vect{\Gfvar}}\Ingf{\Lyap}(\Vect{\Momentum}^{\tstep})
  \; ,
  \\
  \Vect{\Momentum}^{\tstep}
  &=
  \Vect{\Gfvar}^{\tstep} +
  \frac{\tstep-1}{\tstep+2}
  (\Vect{\Gfvar}^{\tstep}-\Vect{\Gfvar}^{\tstep-1})
  \; .
\end{align*}
Typically, the AGD performs better than GD, reaching steady-state with
fewer time steps (we refer to~\cite{Nesterov:2014} for a
complete overview on the AGD approach and on its applications).

In our problem, GD and AGD have practically the same computational
cost, requiring the solution of only a weighted Laplacian matrix per
time step.  However, both the GD and the AGD are explicit
time-stepping schemes as shown in~\cite{SRBD2017}, and thus, in both
approaches, the time step size $\Deltat[\tstep]$ cannot be taken too
large, due to well-known stability issues. Hence, a very large number
of time steps may be required to reach the optimum, affecting the
performance of the algorithm.  On the other hand, implicit schemes do
not suffer from this time step size limitation, at the cost of solving
a non-linear equation. We will see that, in our formulation, the
additional computational cost required at each time step is
offset by a drastic reduction of the total number of time steps
required to reach the steady-state solution.\\

\paragraph{Backward Euler-Gradient Flow} 
We present the backward Euler time stepping of~\cref{eq:gf-ode}, in
the form of finding a sequence $(\Vect{\Gfvar
}^{\tstep})_{\tstep=1,\ldots,\Kmax}$ that, given a sequence of
$\Deltat[\tstep]$ and fixing $\Vect{\Gfvar}^{0}=\Vect{\Gfvar }_0$,
solves the following minimization problem:
\begin{align}
  \label{eq:gf-inf}
  \Vect{\Gfvar}^{\tstepp}&=
  \Argmin_{\Vect{\Gfvar}\in \REAL^{\Nedge} } 
  \left\{
  \Ingf{\Lyap}(\Vect{\Gfvar})
  +
  \frac{1}{2\Deltat[\tstep]}
  \Norm[\Vect{\Length}]{\Vect{\Gfvar}-\Vect{\Gfvar}^{\tstep}}^2
  \right\}
  \; .
\end{align}
Readers familiar with optimization techniques will recognize
in~\cref{eq:gf-inf} the \emph{Proximal Mapping}, while readers
familiar with the discretization of ODEs will recognize the classical
backward-Euler discretization by casting the Euler-Lagrange Equations
of~\cref{eq:gf-inf} in the form
\begin{align*}
  \frac{ 
    ( \Vect{\Gfvar}^{\tstepp} - \Vect{\Gfvar}^{\tstep} )
  }{
    \Deltat[\tstep]
  }
  &=
  -
  \Mlength^{-1}\Differential_{\Vect{\Gfvar}} \Ingf{\Lyap}(\Vect{\Gfvar}^{\tstepp})
  =
  \frac{1}{4}
  \Vect{\Gfvar}^{\tstepp} 
  \odot 
  \left(
    (\Mgrad \Vect{\PotOp}\Of{\Trans(\Vect{\Gfvar}^{\tstepp})})^2-\Vect{1}
  \right)\;.
\end{align*}
However, it is better to rewrite~\cref{eq:gf-inf} in the form of an
inf-sup problem, using the definition of $\Lyap$
in~\cref{eq:min-max,eq:joule}. In fact, the Lagrangian functional
$\Lagr$ in~\cref{eq:min-max} in the $\Vect{\GfCond}$-variable becomes
\begin{equation}  
  \Ingf{\Lagr}(\Vect{\Volt},\Vect{\GfCond})
  =
  \Lagr(\Vect{\Volt},\Trans(\Vect{\GfCond}))
  =
  \Vect{\Forcing}^T \Vect{\Volt}  
  - 
  \frac{1}{2}\Vect{\Volt}^T \Lapl[\Trans(\Vect{\GfCond})] \Vect{\Volt} 
  + 
  \frac{1}{2}  \Trans(\Vect{\GfCond})^T \Vect{\Length}\;,
\end{equation}
hence we look for the solution of the following problem:
\begin{gather}
  \label{eq:inf-sup-gradient-flow}
  (\Vect{\Volt}^{\tstepp},\Vect{\Gfvar}^{\tstepp})=
  \Argmax_{\Vect{\Volt}\in\REAL^{\Nnode}}
  \Argmin_{\Vect{\Gfvar}\in \REAL^{\Nedge} } 
  \left\{
    \Ingf{\InfSupGf}(\Vect{\Volt},\Vect{\Dgfvar},\Vect{\Dgfvar}^\tstep,\Deltat[\tstep])
    :=
    \Ingf{\Lagr}(\Vect{\Volt},\Vect{\GfCond})
    +
    \frac{1}{2\Deltat[\tstep]} \Norm[2,\Vect{\Length}]{\Vect{\Gfvar}-\Vect{\Gfvar}^{\tstep}}^2
  \right\}\;.
\end{gather}
The solution pair
$(\Vect{\Volt}^{\tstepp},\Vect{\Gfvar}^{\tstepp})$ can be
found as the stationary point of the functional
$\Ingf{\InfSupGf}$ in
\cref{eq:inf-sup-gradient-flow}, described by the following
non-linear system of equations:
\begin{equation}
  \label{eq-non-linear-gfvar}
  0\!=\!
  \begin{pmatrix} 
    -
    \Differential _{\Vect{\Pot}}
    \Ingf{\InfSupGf}
    \\
    -
    \Differential _{\Vect{\Gfvar}}
    \Ingf{\InfSupGf}
  \end{pmatrix}
  \!=\!
  \begin{pmatrix}
    \Vect{\Fnewtonpot}(\Vect{\Volt},\Vect{\Dgfvar},\Vect{\Dgfvar}^\tstep,\Deltat[\tstep])
    \\ 
    \Vect{\Fnewtongfvar}(\Vect{\Volt},\Vect{\Dgfvar},\Vect{\Dgfvar}^\tstep,\Deltat[\tstep])
  \end{pmatrix}
  =
  \begin{pmatrix}
    \Lapl[\Trans(\Vect{\GfCond})]\Vect{\Volt}-\Vect{\Forcing}
    \\
    \Vect{\Length}
    \!\odot\!
    \left(
      \frac{1}{4}\Vect{\GfCond}
      \odot
      \left( (\Mgrad\Vect{\Volt})^{2} - \Vect{1} \right)
      -
      \frac{1}{ \Deltat[\tstep]}
      \left(\Vect{\Dgfvar}-\Vect{\Dgfvar}^{\tstep}\right)
    \right)
  \end{pmatrix}.
\end{equation}
At each time step $\tstep$, we apply a damped version of the inexact
Newton-Raphson method to~\cref{eq-non-linear-gfvar} that, given
$0<\Damp\leq 1$, amounts to finding the sequence
$(\Vect{\Volt}^{\tstep,\Newi},\Vect{\Dgfvar}^{\tstep,\Newi})_{\Newi=1,\ldots,\Rmax}$ with
$(\Vect{\Volt}^{\tstep,1},\Vect{\Dgfvar}^{\tstep,1})=(\Vect{\Volt}^{\tstep},\Vect{\Dgfvar}^{\tstep})$
and defined as follows:
\begin{align}
  \label{eq:newton}  
   \Matr{\Jacgf}\Of{\Vect{\Volt}^{\tstep,\Newi},\Vect{\Dgfvar}^{\tstep,\Newi},\Deltat[\tstep]}
   \begin{pmatrix}
     \Vect{x}
     \\
     \Vect{y}
   \end{pmatrix}
   &= 
   -
   \begin{pmatrix}
     \Vect{\Fnewtonpot}(\Vect{\Volt}^{\tstep,\Newi},\Vect{\Dgfvar}^{\tstep,\Newi},\Vect{\Dgfvar}^\tstep,\Deltat[\tstep])
     \\ 
     \Vect{\Fnewtongfvar}(\Vect{\Volt}^{\tstep,\Newi},\Vect{\Dgfvar}^{\tstep,\Newi},\Vect{\Dgfvar}^\tstep,\Deltat[\tstep])
   \end{pmatrix}\;,
   \\
   \label{eq:newton-step}
   \begin{pmatrix} 
     \Vect{\Volt}^{\tstep,\Newii}\\
     \Vect{\Gfvar}^{\tstep,\Newii}
   \end{pmatrix}
   &=
   \begin{pmatrix} 
     \Vect{\Volt}^{\tstep,\Newi}\\
     \Vect{\Gfvar}^{\tstep,\Newi}
   \end{pmatrix}
   +
   \Damp
   \begin{pmatrix}
     \Vect{x}
     \\
     \Vect{y}
   \end{pmatrix}
   \;,
\end{align}
where matrix $\Matr{\Jacgf}$ is the Jacobian matrix of the function
$(\Vect{\Fnewtonpot},\Vect{\Fnewtongfvar})$. At each Newton step we
find an approximate solution of the linear system in~\cref{eq:newton}
such that
\begin{equation}
  \label{eq:relative-res}
  \Norm[2]{
  \Matr{\Jacgf}
  \begin{pmatrix}
    \Vect{x}
    \\
    \Vect{y}
  \end{pmatrix}
  +
  \begin{pmatrix}
    \Vect{\Fnewtonpot}
    \\ 
    \Vect{\Fnewtongfvar}
  \end{pmatrix}
  }
  \leq
  \TolLinearNewton
  \Norm[2]{
    \begin{pmatrix}
      \Vect{\Fnewtonpot}
      \\ 
      \Vect{\Fnewtongfvar}
    \end{pmatrix}
  }\;,
\end{equation}
with $\TolLinearNewton>0$, iterated until
\begin{equation}
  \label{eq:non-linear}
  \Norm[2]{
    \begin{pmatrix}
      \Vect{\Fnewtonpot}/\Norm{\Forcing}
      \\ 
      \Vect{\Fnewtongfvar}
    \end{pmatrix}
  }
  \leq
  \TolNonLinear\;,
\end{equation}
with $\TolNonLinear>0$ (we scale by $\Norm{\Forcing}$ in order to get
a relative residual in the solution of the linear system that defines
the function $\Vect{\Fnewtonpot}$).  The damping parameter should be
kept ideally $\Damp=1$ in order to get the quadratic convergence of
the Newton method. However, smaller $\Damp$ is used in order to
prevent failures of the linear and the non-linear solvers.

The Jacobian matrix $\Matr{\Jacgf}$ in~\cref{eq:newton} can be
calculated as follows:
\begin{align}
  \label{eq:jacobian-gf}
  \Matr{\Jacgf}\Of{\Vect{\Volt},\Vect{\Dgfvar},\Deltat} 
  &=
  \begin{pmatrix}
    \Matr{\Ablock}\Of{\Vect{\Dgfvar}}  & \Matr{\Bblock}^T\Of{\Vect{\Volt},\Vect{\Dgfvar}}
    \\
    \Matr{\Bblock}\Of{\Vect{\Volt},\Vect{\Dgfvar}} & - \Matr{\Cblock}\Of{\Deltat,\Vect{\Volt},\Vect{\Dgfvar}} 
  \end{pmatrix}
  \; ,
\end{align}
where
\begin{align*}
  \Matr{\Ablock}\Of{\Vect{\Dgfvar}} 
  &= 
  \Lapl[\Trans(\Vect{\GfCond})]=
  \Matr{\Diff}\Diag{\Trans(\Vect{\GfCond})}\Mgrad;\,
  \\
  \Matr{\Bblock}\Of{\Vect{\Volt},\Vect{\Dgfvar}}
  &=
  \frac{1}{2} \Diag{ (\Mgrad\Vect{\Volt})\odot \Vect{\GfCond}} \Matr{\Diff}^T\;,
  \\
  \Matr{\Cblock}\Of{\Deltat,\Vect{\Volt},\Vect{\Dgfvar}}
  &=
  \Diag{
    \Vect{\Length}
    \odot
    \left(    
      \frac{1}{4}
      \odot 
      \left(
        (\Mgrad\Vect{\Volt})^{2} - \Vect{1}
      \right)
      +
      \frac{1}{\Deltat}  
    \right)
  }
  \; .
\end{align*}
We will refer to the procedure described in this section as the
\emph{Gradient Flow} (GF) approach, as in \cite{ambrosio2008gradient}.


\begin{remark}
  Here, it is worth highlighting some similarities between the
  Gradient Flow approach proposed in this paper and the interior point
  methods for the solution of minimization problems with non-linear
  constraints~\cite{Wright:1997}. In fact, the functional $\Lyap$ is
  nothing but the Lagrangian functional of the dual problem
  of~\cref{eq:dual}, where the constraints are rewritten as in
  \cref{eq:dual-original,eq:dual-transformed}.  In interior point
  methods, this Lagrangian functional is relaxed with a penalization
  functional of the constraints (typically a logarithmic barrier
  function), scaled by a relaxation factor $\nu>0$, that is
  progressively reduced along the optimization process.  In our
  approach the relaxation functional is the distance square term
  scaled by the factor $\frac{1}{2\Deltat}$, which plays the role of
  the factor $\nu$. In both schemes, the stationary equations for the
  scaled functional are solved via inexact Newton. Moreover the
  sequence of approximated solutions is following in both cases an
  underlying ``curve'', the integral solution of the
  ODE~(\ref{eq:gf-ode}) for the gradient descent equation, and the
  Central Path of the interior point methods.
\end{remark}

\subsection{Linear system solution}
\label{sec-precs}
All the discretization schemes described
in~\cref{sec:time-discretization} require the solution of a sequence
of sparse linear systems. In the GD and the AGD approaches, these
linear systems only involve weighted Laplacian matrices, thus
algebraic multigrid solvers can efficiently handle these problems,
often with computational time that scales linearly with the size of
the matrix. In particular, in our experiments we adopt the
AGgregation-based algebraic MultiGrid (AGMG) software described
in~\cite{Notay:2012} and the Lean Algebraic MultiGrid (LAMG) described
in \cite{Livne-Brandt:2012}, which has been developed specifically for
the solution of linear systems involving weighted Laplacian matrices.
The GF approach instead requires the solution of the saddle point
linear system in~\cref{eq:relative-res}.  We refer
to~\cite{begoli2005} for a complete overview on this topic. We propose
two approaches to tackle this linear algebra problem.\\

\paragraph{Reduced-MG approach}
The first approach proposed in this paper
exploits the fact that the matrix $\Matr{\Cblock}$ is diagonal, thus
it can be trivially inverted, under some restriction on the time step
size $\Deltat$.  The resulting reduced linear system reads as
\begin {subequations}
  \nonumber
  \label{eq:reduced} 
  \begin{align}
    \label{eq:reduced-volt} 
    \overbrace{\left(\Matr{\Ablock} + \Matr{\Bblock}^T 
        \Matr{\Cblock}^{-1}
        \Matr{\Bblock} \right) }^{\SchurMC}
    \Vect{x} &
    = 
    \Vect{\RhsVolt} +  \Matr{\Bblock}^T\Vect{\RhsGfvar}= \Vect{\bar{\RhsVolt}}
    \; ,
    \\
    \label{eq:reduced-gfvar} 
    \Vect{y} &= 
    \Matr{\Cblock}^{-1}
    \left(
      \Matr{\Bblock}\Vect{x}  -     \Vect{\RhsGfvar}
    \right) \; ,
  \end{align}
\end{subequations}
where matrix $\SchurMC$ is nothing but the Schur complement of the
block $\Matr{\Cblock}$ of the matrix $\Matr{\Jacgf}$, which can be
rewritten in the form
\begin{equation}
  \nonumber
  \label{eq:jacobian-reduced}
  \SchurMC
  =
  \Matr{E}\Diag{\Vect{\bar{\Tdens}}}\Mlength^{-1}\Matr{E}^T
  =
  \Lapl[\Vect{\bar{\Tdens}}]\; ,
\end{equation}
with $\Vect{\bar{\Tdens}}$ given by
\begin{equation*}
  \Vect{\bar{\Tdens}}=\Vect{\Tdens} +
  \Deltat
  \frac{
    \left(\frac{1}{2} \Vect{\Gfvar}\odot (\Mgrad\Vect{\Volt} )\right)^{2}
  }{
    \Vect{1} - 
    \frac{\Deltat }{4}((\Mgrad\Vect{\Volt} )^2-\Vect{1})
  }\;.
\end{equation*} 
As long as $\Vect{\bar{\Tdens}}$ is positive, matrix $\Matr{\Cblock}$
takes the form of a weighted Laplacian matrix, thus it can be solved
efficiently via algebraic multigrid solvers. We denote this approach
with \emph{Reduced-MG}. The downside of the Reduced-MG approach is
that we are forced to impose a restriction on the time step size
$\Deltat[\tstep]$, thus more time steps are required to reach steady
state.  In fact, in order to preserve positivity of
$\Vect{\bar{\Tdens}}$ and to avoid division by numbers close to zero,
at each time step, before starting the Newton process, we choose
$\Deltat[\tstep]$ to satisfy the constraint
\begin{equation}
  \label{eq:tol-C}
  \max_{\Iedge=1,\ldots,\Nedge}
  \left\{
  \MatrPlus[\Iedge]{\Iedge}{\Cblock}
  =
  \frac{1}{\Deltat[\tstep]}-(\Mgrad\Vect{\Volt})_{\Iedge}^{2} + 1
  \right\}
  \geq \TolC >0. 
\end{equation}
Then, the damping parameter $\Damp$ in~\cref{eq:newton-step} is tuned
so that~\cref{eq:tol-C} holds at each step of the Newton process. We
start from $\Damp=1$ (which corresponds to the classical Newton
method) and we progressively reduce $\Damp$ until
condition~\ref{eq:tol-C} is met.  If $\Damp$ becomes smaller than a
prefixed lower bound $\MinDamp$ (which in our experiments is fixed at
$\MinDamp=5\times10^{-2}$) we abort the Newton procedure, restarting it after
halving the time step size $\Deltat[\tstep]$. Although having
$\Damp<1$ destroys the quadratic convergence of the Newton-Raphson
approach, our numerical experiments suggests that it prevents in most
cases the failure of the Newton method. After a few damped Newton
iterations, $\Damp$ remains equal to $1$ and the quadratic
convergence is restored.\\

\paragraph{Full-LSC approach}
A possible strategy for avoiding the time step size limitation of the
Reduced-MG approach is to solve the entire saddle point system instead
of the reduced system.  However, we want to avoid
forming and then approximating the inverse of the Schur complement
of the block $\Matr{\Ablock}$ of the matrix $\Matr{\Jacgf}$, which
is given by
\begin{equation*}
  \Matr{\SchurMA}=
  -(\Matr{\Cblock}+\Matr{\Bblock}\Matr{\Ablock}^{-1}\Matr{\Bblock}^T)\; .
\end{equation*}
To this aim, we first note that the matrix
$\Matr{\Jacgf}$ in \cref{eq:jacobian-gf} can be factored as follows
\begin{align}
  \label{eq:W-least-square}
  \Matr{\Jacgf}&=
  \Matr{\Zblock}
  \Matr{\Wblock}\Of{\Deltat,\Vect{\Pot},\Vect{\GfCond}}
  \Matr{\Zblock}^T \; ,
\end{align}
where matrices $\Matr{\Wblock}$ and $\Matr{\Zblock}$
are given by
\begin{align*}
  \Matr{\Wblock}\Of{\Deltat,\Vect{\Pot},\Vect{\GfCond}}
  &=
  \begin{pmatrix}
    \Diag{\Trans(\Vect{\GfCond})}
    &
    \frac{1}{2} \Diag{\Vect{\GfCond}\odot (\Mgrad \Vect{\Pot})}
    \\
    \frac{1}{2} \Diag{\Vect{\GfCond} \odot  (\Mgrad \Vect{\Pot})}
    &
    -
    \Diag{
      \frac{1}{\Deltat}
      -
      \frac{1}{4}
      \odot
      \left(
      (\Mgrad \Vect{\Pot})^2 -\Vect{1} 
      \right)
    }
    \\
  \end{pmatrix}
  \; ,
  \\
  \Matr{\Zblock}
  &=
  \begin{pmatrix}
    \Matr{\Diff} \Mlength^{-1/2}
    &
    0
    \\
    0
    &
    \Mlength^{1/2}
    \\
  \end{pmatrix}\; .
\end{align*}
The factorization of $\Matr{\Jacgf}$ in~\cref{eq:W-least-square} and
the fact that matrices $\Matr{\Zblock},\Matr{\Zblock}^T$ resembles the
divergence and gradient operators in the graph setting, make us
consider the \emph{Least Square Commutator} (LSC) preconditioner,
introduced in~\cite{Elman2006}, as a preconditioner for solving the
saddle point linear systems in our problem.  The LSC preconditioner,
applied to the solution of the linear system in~\cref{eq:newton},
can be written as
\begin{align*}
  \Matr{\PrecW}
  &=
  \Pinv{(\Matr{\Zblock} \Matr{\Zblock}^T)}
  \Matr{\Zblock}
  \Matr{\Wblock}^{-1}
  \Matr{\Zblock}^T
  \Pinv{(\Matr{\Zblock} \Matr{\Zblock}^T)}
  \; ,
  \\
  &=
  \begin{pmatrix}
    \Pinv{\Lapl}[\Vect{1}]
    &
    \!0
    \\
    \!0
    &
    \!\Mlength^{-1}
    \\
  \end{pmatrix}
  \Matr{\Zblock}
  \Matr{\Wblock}^{-1}
  \Matr{\Zblock}^T
  \begin{pmatrix}
    \Pinv{\Lapl}[\Vect{1}]
    &
    0
    \\
    0
    &
    \Mlength^{-1}
    \\
  \end{pmatrix}\; .
\end{align*}
We use the preconditioner $\Matr{\PrecW}$ within the flexible GMRES
algorithm~\cite{Saad1993}.  This preconditioner would be extremely
appealing from the computational point of view, since, besides a few
diagonal scalings and sparse matrix-vector products, it only relies on
solving twice a linear system with an unweighted Laplacian
matrix. Unfortunately, the performance of the preconditioner degrades
after a few time steps starting form $\Vect{\Tdens}_0=\Vect{1}$.  This
suggests considering a second factorization of the Jacobian matrix,
namely
\begin{equation}
  \nonumber
  \label{eq:V-least-square}
  \Matr{\Jacgf}
  =
  \Matr{\ZTDblock}
  \Matr{\Vblock}\Of{\Deltat,\Vect{\Pot},\Vect{\GfCond}}
  \Matr{\ZTDblock}^T,
\end{equation}
where matrices  $\Matr{\Vblock}$ and  $\Matr{\ZTDblock}$ 
are given by
\begin{align*}
  \Matr{\Vblock}\Of{\Deltat,\Vect{\Pot},\Vect{\GfCond}}
  &=
  \begin{pmatrix}
    \Matr{I}
    &
    \Diag{\Mgrad \Vect{\Pot}}
    \\
    \Diag{\Mgrad \Vect{\Pot}}
    &
    -
    \Diag{
      \frac{1}{\Deltat}
      -
      \frac{1}{4}
      \odot
      \left(
        (\Mgrad \Vect{\Pot})^2 -\Vect{1} 
      \right)
    }
    \\
  \end{pmatrix}
  \; ,
  \\
   \label{eq:z-matrices}
  \Matr{\ZTDblock}
  &=
  \begin{pmatrix}
    \Matr{\Diff} \Mlength^{-1/2} \Diag{\Vect{\Tdens} }^{1/2}
    &
    0
    \\
    0
    &
    \Mlength^{1/2}
    \\
  \end{pmatrix}
  \; .
\end{align*}
Note that this factorization strongly relies on the transformation
chosen, $\Trans(\Vect{\Gfvar})=(\Vect{\Gfvar})^2/4$.  The resulting
LSC preconditioner reads as follows:
\begin{align*}
  \Matr{\PrecV}
  &= \Pinv{( \Matr{\ZTDblock} \Matr{\ZTDblock}^T)}
  \Matr{\ZTDblock}
  \Matr{\Vblock}^{-1}
  \Matr{\ZTDblock}^T
  \Pinv{( \Matr{\ZTDblock} \Matr{\ZTDblock}^T)}
  \; ,
  \\
  &=
  \begin{pmatrix}
    \Pinv{\Lapl[\Vect{\Tdens}]}
    &
    0
    \\
    0
    &
    \Mlength^{-1}
    \\
  \end{pmatrix}
  \Matr{\ZTDblock}
  \Matr{\Vblock}^{-1}
  \Matr{\ZTDblock}^T
   \begin{pmatrix}
     \Pinv{\Lapl[\Vect{\Tdens}]}
    &
    0
    \\
    0
    &
    \Mlength^{-1}
    \\
   \end{pmatrix}
   \; ,
\end{align*}
again used within the flexible GMRES algorithm.
Similar to the reduced system approach, at each time step and at each
Newton step we restrict the time step size $\Deltat[\tstep]$
and the damping parameter $\Damp$ so that the following condition
holds:
\begin{equation}
  \label{eq:tol-detsV}
  \max\left\{
    \frac{1}{\Deltat}
    -
    \frac{1}{4}
    \left(
    (\Mgrad \Vect{\Pot})^2 -\Vect{1} 
    \right)
    +
    (\Mgrad \Vect{\Pot})^2
    \right\}
  \geq
  \TolC
  \; ,
\end{equation}
in order to avoid inversion of matrices close to being singular.  In
this case, the main computational effort is spent in the solution of
two linear systems with $\Vect{\Tdens}$-weighted Laplacian matrices,
which can be achieved with the algebraic
multigrid approach. We denote this preconditioner approach as
\emph{full-LSC}.

\section{Numerical Experiments}
\label{sec:numerical-experiments}
In this section, we describe three test cases we devised in order to
test the accuracy and efficiency of the different methods proposed for
the solution of the \OTP\ on graphs.\\

\paragraph{Test-case~1}
The first test-case problem is the Single Source Shortest Path (SSSP),
which can be described as follows. Fix a node $\bar{\Node}$ in
the graph $\Graph$, say $\bar{\Node}=\Node_1$. We want to find the
shortest-path distance from the node $\Node_1$ to all other nodes in
$\Graph$. Thus we want to determine
\begin{equation}
  \nonumber
  \label{eq:sssp}
  \Vect{\Opt{d}}\in \REAL^{\Nnode}\;,
  \quad  
  \Vect[\Inode]{\Opt{d}} :=\Dist_{\Graph}(\Node_{1},\Node_{i})
  \; .
\end{equation}

The solution of the SSSP is naturally related to the problem of
finding a shortest path tree rooted at $\Node_1$.  This is a spanning
tree $\Tree$ of $\Graph$ such that, given any node $\Node$ in
$\Nodes\setminus\Node_1$, the path from the root $\Node_1$ to $\Node$
(unique since $\Tree$ is a tree) is a shortest path from $\Node_1$ to
$\Node$ in $\Graph$. Note that there may exist multiple shortest path
trees.  The SSSP and the OTP on graphs are related by the following
proposition.
\begin{Prop}
  The solution of the SSSP problem rooted at $\Node_1$ is equivalent
  to solving the \OTP\ on graphs with the following source and sink
  vectors:
  \begin{equation}
    \nonumber
    \label{eq:forcing-eikonal}
    \Vect[\Inode]{\Source}=
    \left\{
    \begin{aligned}
      & 0 &\mbox{if } \Inode=1\\
      & 1/(\Nnode-1) &\mbox{otherwise}
    \end{aligned} 
    \right.
    \; ,
    \quad
    \Vect[\Inode]{\Sink}=
    \left\{
    \begin{aligned}
      & 1 &\mbox{if } \Inode=1\\
      & 0 &\mbox{otherwise}
    \end{aligned} 
    \right.
    \; .
  \end{equation}
  Given any shortest-path tree $\Tree$ rooted at $\Node_1$, the
  vectors
  \begin{align}
    \label{eq:eik-optpot}
    \Vect{\OptPot} &
    =  \Vect{\Opt{d}}
    \; ,
    \\
    \label{eq:eik-opttdens}
    \Vect[\Iedge]{\Opt{\Tdens}}(\Tree) 
    &:= 
    \left\{
    \begin{aligned}
      &
      \frac{1}{\Nnode-1}
      \left\{
      \parbox{7cm}{Number of shortest paths in $\Tree$ between
        any node
        $\Node_\Inode$ and $\Node_1$ passing through edge $\Iedge$}
      \right\} &\mbox{if } \Iedge \in \Tree \\ &0 &\mbox{otherwise}
    \end{aligned}
    \right.
    \; ,
    \\
    \label{eq:eik-optvel}
    \Vect{\Opt{\Dflux}} &= \Vect{\Opt{\Tdens}}\odot \Mgrad \Vect{\OptPot}
    \; ,
  \end{align}
  solve \cref{eq:dual,eq:min-lyap,eq:beckmann} respectively.
\end{Prop}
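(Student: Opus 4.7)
The plan is to verify that the triple $(\Vect{\Opt{\Dflux}}, \Vect{\Opt{\Pot}}, \Vect{\Opt{\Tdens}})$ defined in \cref{eq:eik-optpot,eq:eik-opttdens,eq:eik-optvel} satisfies the KKT optimality conditions \cref{eq:bp} together with the flux representation \cref{eq:optdflux}. Once these are in place, Proposition~\ref{prop:equiv} immediately yields that $\Vect{\Opt{\Pot}}$ solves \cref{eq:dual}, $\Vect{\Opt{\Tdens}}$ solves \cref{eq:min-lyap}, and $\Vect{\Opt{\Dflux}}$ solves \cref{eq:beckmann}; in particular, since the dual optimizer coincides with the vector of shortest-path distances from $\Node_1$, this gives the advertised equivalence between SSSP and the OTP for the prescribed forcing.

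First I would check dual feasibility. For any edge $\Iedge$ joining $\Node_i$ and $\Node_j$ with weight $\Vect[\Iedge]{\Length}$, the triangle inequality for the shortest-path metric gives $|\Vect[j]{\Opt{d}} - \Vect[i]{\Opt{d}}| \leq \Vect[\Iedge]{\Length}$, hence $|(\Mgrad \Vect{\Opt{\Pot}})_\Iedge| \leq 1$, which is \cref{eq:bp-bound}. Next, I would verify the eikonal equality \cref{eq:bp-eikonal}: if $\Iedge \in \Tree$ connects $\Node_i$ (the parent in $\Tree$, closer to the root) to its child $\Node_j$, then the unique $\Tree$-path from $\Node_1$ to $\Node_j$ traverses $\Iedge$ and is a shortest path in $\Graph$, forcing $\Vect[j]{\Opt{d}} = \Vect[i]{\Opt{d}} + \Vect[\Iedge]{\Length}$, so $|(\Mgrad \Vect{\Opt{\Pot}})_\Iedge| = 1$. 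Off $\Tree$ we have $\Vect[\Iedge]{\Opt{\Tdens}} = 0$ by \cref{eq:eik-opttdens}, so the complementarity requirement is vacuous there.

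The main obstacle — and the combinatorial heart of the argument — is checking Kirchhoff's law $\Matr{\Diff}\Vect{\Opt{\Dflux}} = \Vect{\Forcing}$, equivalently \cref{eq:bp-div}. Since $\Vect{\Opt{\Tdens}}$ vanishes off $\Tree$, only tree edges contribute. Root $\Tree$ at $\Node_1$ and for each non-root $\Node_i$ let $n_i$ denote the number of nodes in the subtree rooted at $\Node_i$. By \cref{eq:eik-opttdens}, the tree edge from $\Node_i$ to its parent has conductivity $n_i/(\Nnode-1)$, and by the eikonal relation established above it carries a flux of magnitude $n_i/(\Nnode-1)$ directed from $\Node_j$-of-larger-$\Opt{d}$ toward $\Node_1$. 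Summing signed contributions at $\Node_i$ and using the recursion $n_i = 1 + \sum_{\Node_j \text{ child of } \Node_i} n_j$, the net divergence equals $1/(\Nnode-1) = \Vect[i]{\Forcing}$; at the root all $\Nnode-1$ shares converge and the net divergence is $-1 = \Vect[1]{\Forcing}$. The delicate part here is keeping track of how the fixed orientation of $\Matr{\Diff}$ interacts with the root-directed flow, but the sign bookkeeping is forced by the eikonal identity.

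Finally I would verify the flux representation \cref{eq:optdflux}: on a tree edge the magnitude of $\Vect[\Iedge]{\Opt{\Tdens}}\,(\Mgrad\Vect{\Opt{\Pot}})_\Iedge$ equals $\Vect[\Iedge]{\Opt{\Tdens}}$ because $|(\Mgrad\Vect{\Opt{\Pot}})_\Iedge|=1$, and the sign of $(\Mgrad\Vect{\Opt{\Pot}})_\Iedge$ points from the root-side vertex (smaller $\Opt{d}$) to the child-side vertex (larger $\Opt{d}$), matching the tree-induced flux direction used in the Kirchhoff verification; off $\Tree$ both sides vanish. With all three KKT conditions \cref{eq:bp-div,eq:bp-bound,eq:bp-eikonal} and the representation \cref{eq:optdflux} established, Proposition~\ref{prop:equiv} closes the argument.
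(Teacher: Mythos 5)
Your proof is correct and follows essentially the same route as the paper's: both certify optimality by checking that $\Vect{\Opt{\Dflux}}$ and $\Vect{\Opt{\Pot}}$ are admissible for \cref{eq:beckmann,eq:dual} and jointly optimal --- the paper phrases the certificate as a vanishing duality gap, while you phrase it as the KKT system \cref{eq:bp}, which is the same thing once the eikonal equality on tree edges is combined with the representation \cref{eq:optdflux}. The substantive content in either version is dual feasibility via the triangle inequality and the subtree-counting verification of Kirchhoff's law, both of which you carry out correctly and in more detail than the paper does.
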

\begin{proof}
The proof of the above Proposition is based on the fact that vectors
$\Vect{\Opt{\Dflux}}$ and $\Vect{\Opt{\Pot}}$
in~\cref{eq:eik-optvel,eq:eik-optpot} are admissible solutions of
\cref{eq:beckmann,eq:dual}, and the duality gap
in~\cref{eq:duality-gap} is zero, ensuring optimality of both
solutions.  Note that this holds true taking any convex combination of
elements in the set $\left\{ \Vect{\Opt{\Tdens}}(\Tree) \ : \:
\Tree=\mbox{Shortest path Tree rooted at }\Node_1\right\}$.
\end{proof}

This test case is particularly relevant in our discussion, being an
example where the solution of~\cref{eq:dual} is unique (up to a
constant), while the solutions of~\cref{eq:beckmann,eq:min-lyap} are
not.  In fact, for any $\Vect{\Opt{\Cond}}$, the kernel of the matrix
$\Lapl[\Vect{\Opt{\Cond}}]$ consists only of the constant vectors,
since each node of the graph is ``surrounded'' by at least one edge
with non-zero conductivity. In contrast, the attractor of the
dynamical equation~\cref{eq:gf-ode} is the image through the map
$\Trans^{-1}$ of any convex combination of $\left\{
\Vect{\Opt{\Tdens}}(\Tree) \, : \, \Tree=\mbox{Shortest path
  Tree}\right\}$.

In this test case we consider graphs describing the node connection of
a regular grid triangulation of the square $[0,1]\times[0,1]$, thus two
coordinates $(x_{\Inode},y_{\Inode})$ are assigned to each node
$\Inode$.  The vector $\Vect{\Length}$ is the Euclidean distance
between the nodes connected by two edges.  Node $\Node_1$ has
coordinates $(0.5,0)$.  We generate a sequence of graphs
$(\Graph_{\Igraph}=(\Nodes_{\Igraph},\Edges_{\Igraph}))_{\Igraph=0,\ldots,5}$
starting from an initial triangulation $\Triang_0$ (reported
in~\cref{fig:rect-opt}), conformally refined 5 times.\\

\paragraph{Test-case~2}
The second test-case problem is the graph counterpart of
Test-case~1 considered in \cite{Facca-et-al-numeric:2020}, where the
OTP is posed in the continuous setting $\Banach=[0,1]\times[0,1]$.
We consider the same sequence of graphs
$(\Graph_{\Igraph})_{\Igraph=0,\ldots,5}$ described in Test-case~1 and a
forcing term $\Vect{\Forcing}$ given by
\begin{equation}
  \nonumber
  \label{eq:forcing-tc1}
  \Vect[\Inode]{\Forcing}
  =
  \left\{
    \begin{aligned}
      &C &\mbox{ if } x_{\Inode},y_{\Inode}\in [0.125,0.375]\times[0.25,0.75]\\
      &-C &\mbox{ if } x_{\Inode},y_{\Inode}\in [0.625,0.875]\times[0.25,0.75]\\
      &0 &\mbox{ otherwise} 
    \end{aligned}
  \right. \; ,
\end{equation}
where $C=(\sqrt{\Nnode}-1)$.  For this test-case there exist
explicit solutions for~\cref{eq:beckmann,eq:dual,eq:min-lyap}.  In
fact, taking the barycenters of the edges of $\Graph$
\begin{equation*}
  (\bar{x},\bar{y})_\Iedge
  =
  0.5(x_{\Inode}+x_{\Jnode},y_{\Inode}+y_{\Jnode})
  \; ,\; e=(\Node_{\Inode},\Node_{\Jnode}),
\end{equation*}
a solution $\Vect{\OptTdens}$ of~\cref{eq:min-lyap} is given by
\begin{equation}
  \nonumber
  \label{eq:rect-optdens}
  \Vect[\Iedge]{\OptTdens}
  =
  \left\{
    \begin{aligned}
      &C(\bar{x}_{\Iedge}-0.125) & &\mbox{ if } \bar{x}_{\Iedge}\in
      [0.125,0.375],& &y_{\Node_1}=y_{\Node_2} \in[0.25,0.75]
      \\
      &2
      & &\mbox{ if } \bar{x}_{\Iedge}\in [0.375,0.625],
      & &y_{\Node_1}=y_{\Node_2}
      \in[0.25,0.75]
      \\
      &C(0.875-\bar{x}_{\Iedge})
      & &\mbox{ if }
      \bar{x}_{\Iedge}\in [0.625,0.875],& &y_{\Node_1}=y_{\Node_2}
      \in[0.25,0.75]
      \\
      &0 & &\mbox{ otherwise}
    \end{aligned}
  \right.,
\end{equation}
while a solution of \cref{eq:dual} is given by
\begin{equation}
  \label{eq:rect-optpot}
  \Vect[\Inode]{\OptPot}
  =
  x_{\Inode}
  \; .
\end{equation}
Then, it is easy to check that $
\Vect{\Opt{\Dflux}}=\Vect{\Opt{\Tdens}}\odot \Mgrad \Vect{\OptPot} $
and $\Vect{\OptPot}$ satisfy the constraints of
\cref{eq:beckmann,eq:dual} with zero duality gap, proving that they
are optimal solutions.  Note that there exist infinitely many
solutions of~\cref{eq:dual}. In fact, it is possible to ``perturb''
the values of the solution $\Vect{\OptPot}$ in~\cref{eq:rect-optpot}
on those nodes ``outside'' the rectangle
$[0.125,0.875]\times[0.25,0.75]$, still satisfying the constraints
of~\cref{eq:dual}, while the value of the cost functional
$\Vect{\Forcing}^T\Vect{\OptPot}$ remains the same.

\begin{figure}
  \centerline{
    \includegraphics[width=0.49\textwidth,
    trim={0.0cm 0cm 0.0cm 0.0cm},clip
    ]{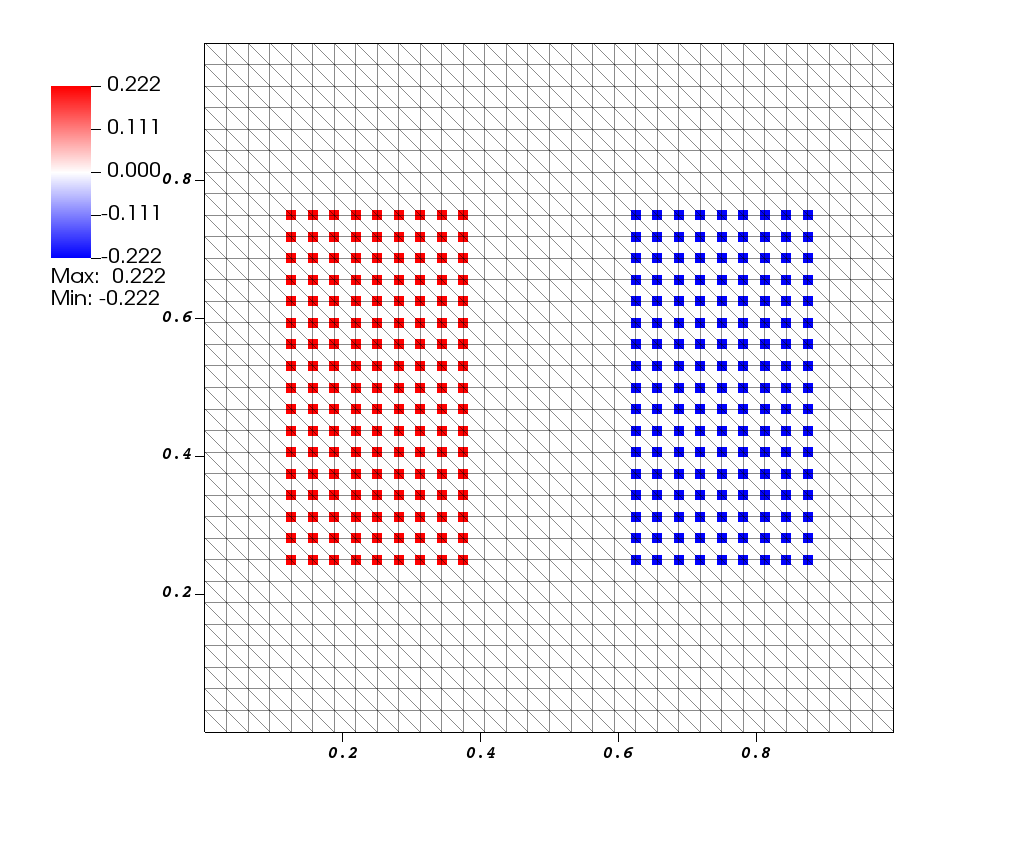}
    \includegraphics[width=0.49\textwidth,
    trim={0.0cm 0cm 0.0cm 0.0cm},clip
    ]{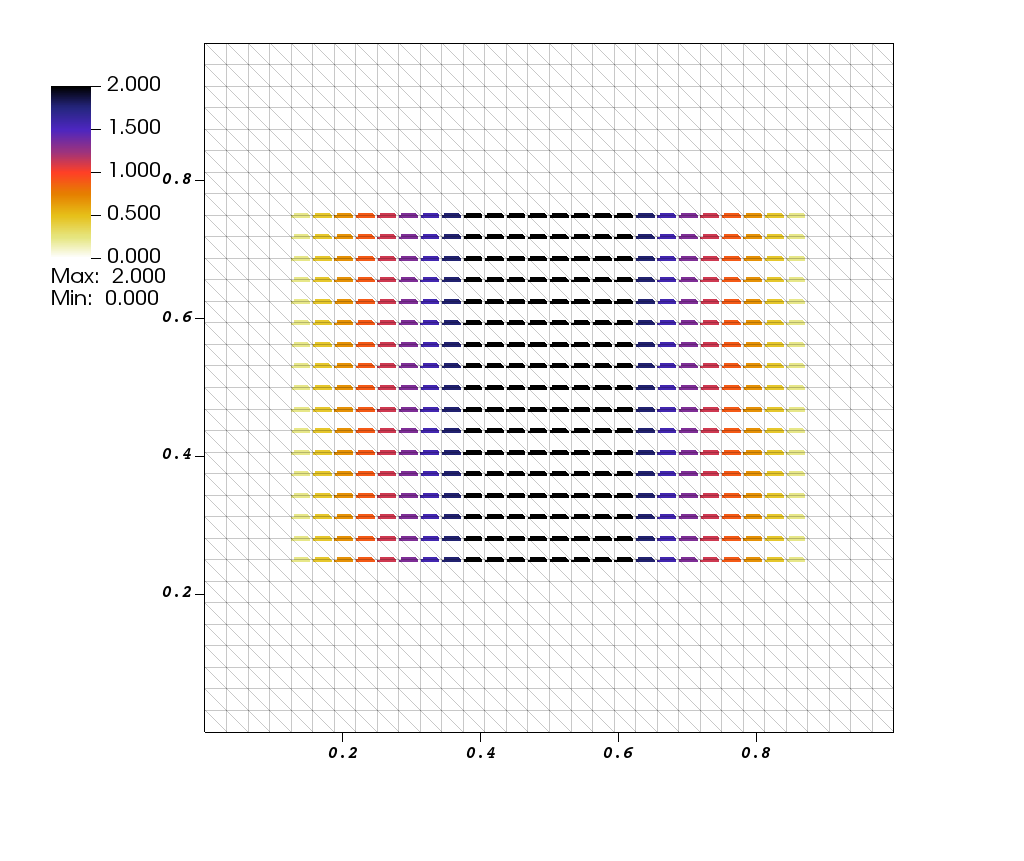}
  }
  \caption{
    We report on the left the spatial distribution of the forcing 
    term $\Vect{\Forcing}$ for the Test-case~2, for the graph
    $\Graph_0$. On the right panel we report the corresponding 
    solution $\Vect{\OptTdens}$ of~\cref{eq:min-lyap}.
  }
  \label{fig:rect-opt}
\end{figure} 
In~\cref{fig:rect-opt}, we report the graph $\Graph_0$, together with
the spatial distribution of the forcing term $\Vect{\Forcing}$ and
optimal solution $\Vect{\OptTdens}$.\\

\paragraph{Test-case~3}
In the third test-case, we consider graphs less structured than those
in Test-case~1 and Test-case~2. In particular we consider three
well-known categories of synthetic random graphs: the
Erd\"os-R\'enyi~\cite{Erdos:1959}, the
Watts-Strogatz~\cite{Watts1998}, and the Barabasi-Albert graphs
\cite{Barabasi99}. We generate graphs with increasing number of
nodes/edges using the Python package Networkx (see
\cite{SciPyProceedings_11}).  We repeat this procedure 10 times,
getting groups of 10 graphs with similar characteristics (number of
nodes, number of edges, construction procedure) but different topology
structure.  We assign to each graph a different forcing term
$\Vect{\Forcing}$ and weight $\Vect{\Length}$.  The forcing term
vector $\Vect{\Forcing}$ is generated randomly within a uniform
distribution in $[-1,1]$ on a fraction of the nodes. We consider two
scenarios, one where all entries of the forcing term $\Vect{\Forcing}$
are non-zeros (similar to Test-case~1) and another where this holds
only for the $10\%$ of the entries (similar to Test-case~2). The
negative entries of the resulting vector $\Vect{\Forcing}$ are then
scaled so that $\Vect{\Forcing}^T\Vect{1}=0$. The weight vector
$\Vect{\Length}$ is generated randomly with a uniform distribution in
$[0.5,1.5]$.

This procedure is done in order to average the metrics of our
simulations (total number of time steps, Newton steps, CPU time,
etc.) when the results are presented.

\section{Numerical Results}
\label{sec:gd-agd-gf}
All the experiments presented in this section are done on a quad-core 1.8
GHz 64-bit Intel Core I7 machine with 16 GBs memory. The software is
written in Matlab, with the linear solvers AGMG and LAMG working
via MEX interface.  All simulations were performed on a single core.
We summarize in~\cref{sec:parameters} a discussion on the values of the
different parameters that control our algorithm.

\subsection{Gradient Descent - Accelerated Gradient Descent - Gradient Flow}
In this section we compare in terms of accuracy and computational
efficiency the three time discretization schemes for~\cref{eq:gf-ode}
described in~\cref{sec:time-discretization}, Gradient Descent (GD),
Accelerated Gradient Descent (AGD), and Gradient Flow (GF).
\begin{figure}
  \centerline{
    \includegraphics[width=1\textwidth,
    trim={0.0cm 0cm 0.0cm 0.0cm},clip
    ]{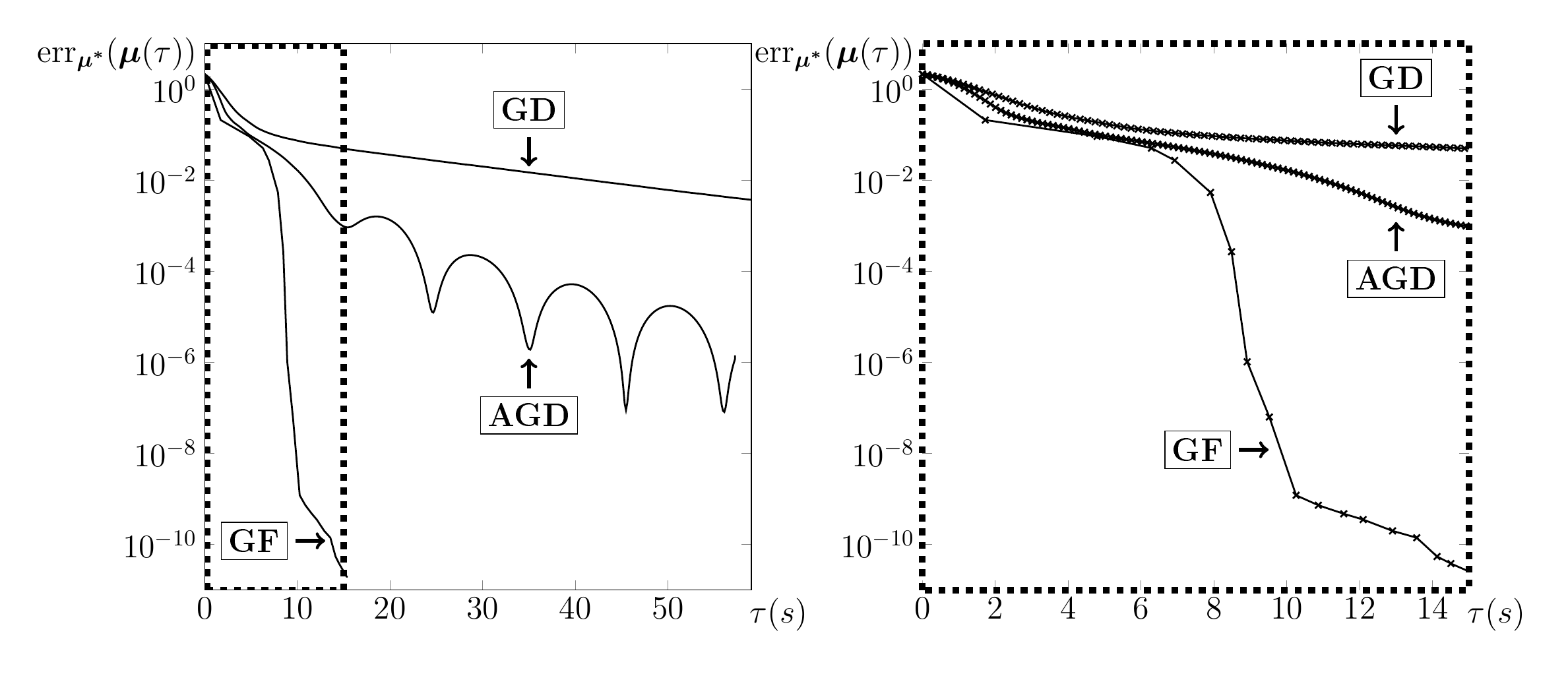}
  }
  \caption{ Plot of the quantity $\ErrTdens$ defined in
    \cref{eq:err-tdens} versus CPU time for the Gradient Descent (GD),
    Accelerated Gradient Descent (AGD) and the Gradient Flow (GF)
    approaches.  We report the results for Test-Case~2 using
      $\Graph_3$ (66049 nodes and 197120 edges).  The left panel
    reports the error evolution for the whole simulation.  The right
    panel, with dashed border, magnifies the portion of the left
    panel where the GF approach converged
    to the optimal solution.  }
  \label{fig:gd-agd-gf}
\end{figure}
To this aim, we consider only Test~case-2, where we have an explicit
formula for the optimal solution $\Vect{\Opt{\Tdens}}$
of~\cref{eq:min-lyap}, and we report in~\cref{fig:gd-agd-gf} the
evolution of the $\Vect{\Tdens}$-error
\begin{equation}
  \label{eq:err-tdens}
  \ErrTdens(\Vect{\Tdens})=
  \Norm[\Vect{\Length}]{\Vect{\Tdens}-\Vect{\Opt{\Tdens}}}
  /
  \Norm[\Vect{\Length}]{\Vect{\Opt{\Tdens}}},
\end{equation}
with respect to CPU time for the three approaches.  We present only
the results for $\Graph_3$ (66049 nodes and 197120 edges) since they
are representative of what we observed using smaller and larger
graphs.  The initial data used is
$\Vect{\Gfvar}^0=\Trans^{-1}(\Vect{1})$.  In the GD and the AGD
approaches, we start from an initial time step size
$\Deltat[\tstep]=0.1$ progressively increased by a factor $1.05$ until
a maximum of $\Deltat[\tstep]=2$, which was found experimentally to
guarantee stability of the schemes.  In the GF scheme we adopt the
Reduced-MG approach for the solution of the linear systems arising
from the Newton method, thus the time step size is tuned according to
condition~\cref{eq:tol-C}. For these experiments, the AGMG software
turns out to be the most efficient among the multigrid solvers for the
arising linear systems.

The left panel in~\cref{fig:gd-agd-gf} shows that the GF approach
outperforms the other two methods in terms of overall efficiency.  On
the right panel we magnify the portion of the left panel where the GF
approach converges toward the optimal solution.  Here, we can see that
each time step of GD and AGD requires much less computational effort
with respect to the GF approach, but more time steps are required.
The error-versus-time plot using the GD approach is monotonically
decreasing, but the convergence is extremely slow.  Using the ADG
scheme, the convergence toward the optimum is faster although
oscillations occur as the optimum is approached. These are both
well-known phenomena using this method (see, for
example,~\cite{su2014differential}).  During the initial steps of the
GF approach (from the first to the fourth) the error reduction per CPU
time is practically the same for all approaches. But then, the
convergence of the GF becomes much faster since we are getting closer
to the steady state, thus larger time step sizes $\Deltat[\tstep]$ are
used. In the intermediate steps ($5^{th}-9^{th}$), the GF approach
reduces the $\Vect{\Tdens}$-error by at least one order of magnitude
per step. In the final steps ($9^{th}-17^{th}$), the convergence
toward the optimal solution slows down due to some issues with the
linear solver, discussed in~\cref{sec:test-case-2}. When these
problems arise, the AGD approach may be a valid alternative to the GF
approach that guarantees a good trade-off between accuracy and
efficiency.

\subsection{Reduced-MG vs Full-LSC}
\label{sec:reduced-full}
In this section we compare the different approaches presented
in~\cref{sec-precs} for the solution of the saddle point linear
system. We take the graphs $\Graph_0,\Graph_1$ and $\Graph_2$ of
Test-case~1. We consider $\Vect{\Gfvar}^0=\Trans^{-1}(\Vect{1})$, and
we fix $\TolOpt=10^{-6}$.
\begin{table}
  \centering
  \begin {tabular}{cc|c|c|c|c|r<{\pgfplotstableresetcolortbloverhangright }@{}l<{\pgfplotstableresetcolortbloverhangleft }c}%
\toprule \multicolumn {2}{c|}{\textbf {Graph}} &\textbf {Approach} &\textbf {Time} &\multicolumn {1}{c|}{\textbf {Newton}} &\multicolumn {1}{c|}{\textbf {AGMG}} &\multicolumn {3}{c}{\textbf {CPU}} \\\textbf {$|\Nodes |$}&\textbf {$|\Edges |$}&\textbf { }&\textbf {steps}&\textbf {steps}&\textbf {Iters.}&\multicolumn {2}{c}{\textbf {(s)}}&\textbf {\%(L,B,W)}\\\midrule %
\pgfutilensuremath {1{,}089}&\pgfutilensuremath {3{,}136}&Full-LSC&\pgfutilensuremath {4}&\pgfutilensuremath {20}&\pgfutilensuremath {568}&$0$&$.7$&(83.3-16.7-00.0)\\%
&&Reduced-MG&\pgfutilensuremath {8}&\pgfutilensuremath {36}&\pgfutilensuremath {423}&$0$&$.14$&(83.6-12.2-04.2)\\\bottomrule %
\pgfutilensuremath {4{,}225}&\pgfutilensuremath {12{,}416}&Full-LSC&\pgfutilensuremath {4}&\pgfutilensuremath {27}&\pgfutilensuremath {1{,}092}&$4$&$.58$&(87.4-12.6-00.0)\\%
&&Reduced-MG&\pgfutilensuremath {8}&\pgfutilensuremath {40}&\pgfutilensuremath {589}&$0$&$.41$&(88.5-06.6-05.0)\\\bottomrule %
\pgfutilensuremath {16{,}641}&\pgfutilensuremath {49{,}408}&Full-LSC&\pgfutilensuremath {4}&\pgfutilensuremath {37}&\pgfutilensuremath {2{,}148}&$36$&$.5$&(89.1-10.9-00.0)\\%
&&Reduced-MG&\pgfutilensuremath {9}&\pgfutilensuremath {52}&\pgfutilensuremath {868}&$2$&$.07$&(92.4-04.8-02.8)\\\bottomrule %
\end {tabular}%

  \caption{ Comparison between preconditioning approaches.  The first
    two columns describe the dimensions of the graph considered. The
    remaining columns, from left to right, report the followings
    results: the total number of time steps required to achieve
    convergence $\TolOpt\leq 10^{-6}$, the total number of Newton
    steps (which is equal to the number of saddle point linear system
    solved), the total number of multigrid iterations performed (using
    AGMG).  The last column reports the CPU-time in seconds required
    to achieve convergence, and the percentage of CPU-time spent in
    solving linear systems (L), building the matrices (B), and wasted
    due to failures of linear and non-linear solvers (W).
  }
  \label{tab:reduced-full}
\end{table}
In this experiment AGMG turns out to be, again, the most efficient
multigrid solver for the linear systems with weighted Laplacian
matrices we need to solve in both approaches. The Full-LSC approach
requires less time steps, since the restriction in~\cref{eq:tol-detsV}
is less strict than that in~\cref{eq:tol-C}.  This results in fewer
linear systems to be solved and in fewer failures of the linear and
non-linear solvers, but the overall performance of the preconditioner
$\Matr{\PrecV}$ is worse. In fact, not only it requires the
approximate solution of two weighted Laplacian systems via multigrid
solver for each flexible GMRES iteration, but the total number of
iterations increases rapidly with the size of the graph.

The Reduced-MG approach may require a greater number of time steps and
non-linear iterations, resulting in a greater number of linear systems
to be solved. However, these linear systems involve weighted Laplacian
matrices that can be efficiently handled via algebraic multigrid
methods. 

\subsection{Test-case~1}
\label{sec:test-case-1}
In this section we present the results obtained for Test-case~1,
taking the graphs $(\Graph_{\Igraph})_{\Igraph=0,\ldots,5}$. In this
test case, we adopt the GF scheme combined with the reduced-MG
approach, using the AGMG algorithm as algebraic multigrid solver,
since it turns out to be the most efficient in terms of CPU time.  In
all experiments we used $\Vect{\Gfvar}^0=\Trans^{-1}(\Vect{1})$ as
initial data. We use a tight tolerance $\TolOpt=10^{-14}$ to declare
the convergence toward the steady state.  For this test-case, we
measure the accuracy of the method using the relative error with
respect to the optimal solution $\Vect{\Opt{\Pot}}$ of~\cref{eq:dual}
given in~\cref{eq:eik-optpot}, i.e.,
\begin{equation}
  \label{eq:err-pot}
  \ErrPot(\Vect{\Pot}):=
  {
    \Norm[2]{\Vect{\Pot}-\Vect{\Opt{\Pot}}}
  }{
    /\Norm[2]{\Vect{\Opt{\Pot}}}
  }.
\end{equation}
   
\begin{table}
  \centering
  \begin {tabular}{rrr|c|c|c|r<{\pgfplotstableresetcolortbloverhangright }@{}l<{\pgfplotstableresetcolortbloverhangleft }|c}%
\toprule \multicolumn {3}{c|}{\textbf {Graphs}} & \multicolumn {1}{c|}{\textbf {Time}} &\multicolumn {1}{c|}{\textbf {Newton}} &\multicolumn {1}{c|}{\textbf {AGMG}} &\multicolumn {2}{c|}{\textbf {CPU}} &\multicolumn {1}{c}{\textbf {Errors}} \\\textbf {$\Graph $}&\textbf {$|\Nodes |$}&\textbf {$|\Edges |$}&\textbf {steps}&\textbf {steps}&\textbf {Iters.}&\multicolumn {2}{c}{\textbf {(s)}}&$\ErrPot (\Vect {\MyOpt {\Pot }})$\\\midrule %
$\Graph _0$&\pgfutilensuremath {1.1\cdot 10^{3}}&\pgfutilensuremath {3.1\cdot 10^{3}}&\pgfutilensuremath {9}&\pgfutilensuremath {29}&\pgfutilensuremath {335}&$0$&$.0963$&3.3e-15\\%
$\Graph _1$&\pgfutilensuremath {4.2\cdot 10^{3}}&\pgfutilensuremath {1.2\cdot 10^{4}}&\pgfutilensuremath {6}&\pgfutilensuremath {25}&\pgfutilensuremath {359}&$0$&$.226$&2.7e-13\\%
$\Graph _2$&\pgfutilensuremath {1.7\cdot 10^{4}}&\pgfutilensuremath {4.9\cdot 10^{4}}&\pgfutilensuremath {7}&\pgfutilensuremath {26}&\pgfutilensuremath {399}&$0$&$.902$&9.0e-14\\%
$\Graph _3$&\pgfutilensuremath {6.6\cdot 10^{4}}&\pgfutilensuremath {2.0\cdot 10^{5}}&\pgfutilensuremath {7}&\pgfutilensuremath {28}&\pgfutilensuremath {456}&$4$&$.43$&3.3e-15\\%
$\Graph _4$&\pgfutilensuremath {2.6\cdot 10^{5}}&\pgfutilensuremath {7.9\cdot 10^{5}}&\pgfutilensuremath {7}&\pgfutilensuremath {31}&\pgfutilensuremath {545}&$24$&$.2$&6.7e-16\\%
$\Graph _5$&\pgfutilensuremath {1.1\cdot 10^{6}}&\pgfutilensuremath {3.1\cdot 10^{6}}&\pgfutilensuremath {8}&\pgfutilensuremath {34}&\pgfutilensuremath {646}&$136$&$$&5.0e-14\\\bottomrule %
\end {tabular}%

  \caption{Results for Test-case~1. From left to right, we report: the
    graph considered with the number of nodes $|\Nodes|$ and edges
    $|\Edges|$; the number of time steps required to achieve steady
    state, the total number of Newton steps (equal to the
    number of linear systems to be solved), the total number of
    multigrid iterations performed, the CPU time in seconds,
    and the error in the steady state solution
    $\Vect{\MyOpt{\Pot}}$ with respect to the optimal potential
    $\Vect{\MyOpt{\Pot}}$ defined in~\cref{eq:err-pot}.}
  \label{tab:eik}
\end{table}

The results in~\cref{tab:eik} show that the number of time steps
required to reach steady state is practically constant with respect to
the size of graph. The same holds with the total number of Newton
steps, which corresponds to the number of linear systems we need to
solve.  We attribute this phenomenon to the fact that, for this test
case, the graph considered is ``grid-like'' and the kernel of matrix
$\Lapl[\Vect{\Opt{\Cond}}]$ consists only of the constant vectors.
The experiments in the next sections show that when one of these two
conditions fails, the total number of linear systems to be solved
grows with the size of the graphs. We observed a modest increase in
the average number of multigrid iterations per linear system, which
goes from $12$ for the smallest ($\approx 10^3$ nodes/edges) to $ 19$
for the largest graph ($\approx10^6$ nodes/edges). This results in a
CPU time that grows slightly more than linearly with respect to the
graph size.  While in term of CPU time the proposed method is still
not comparable with algorithms dedicated to the solution of the SSSP,
such as those presented in~\cite{Madduri2007}, we are pleasantly
surprised by its accuracy and efficiency.

\begin{figure}
  \centerline{
    \includegraphics[width=1\textwidth,
    trim={0.0cm 0cm 0.0cm 0.0cm},clip
    ]{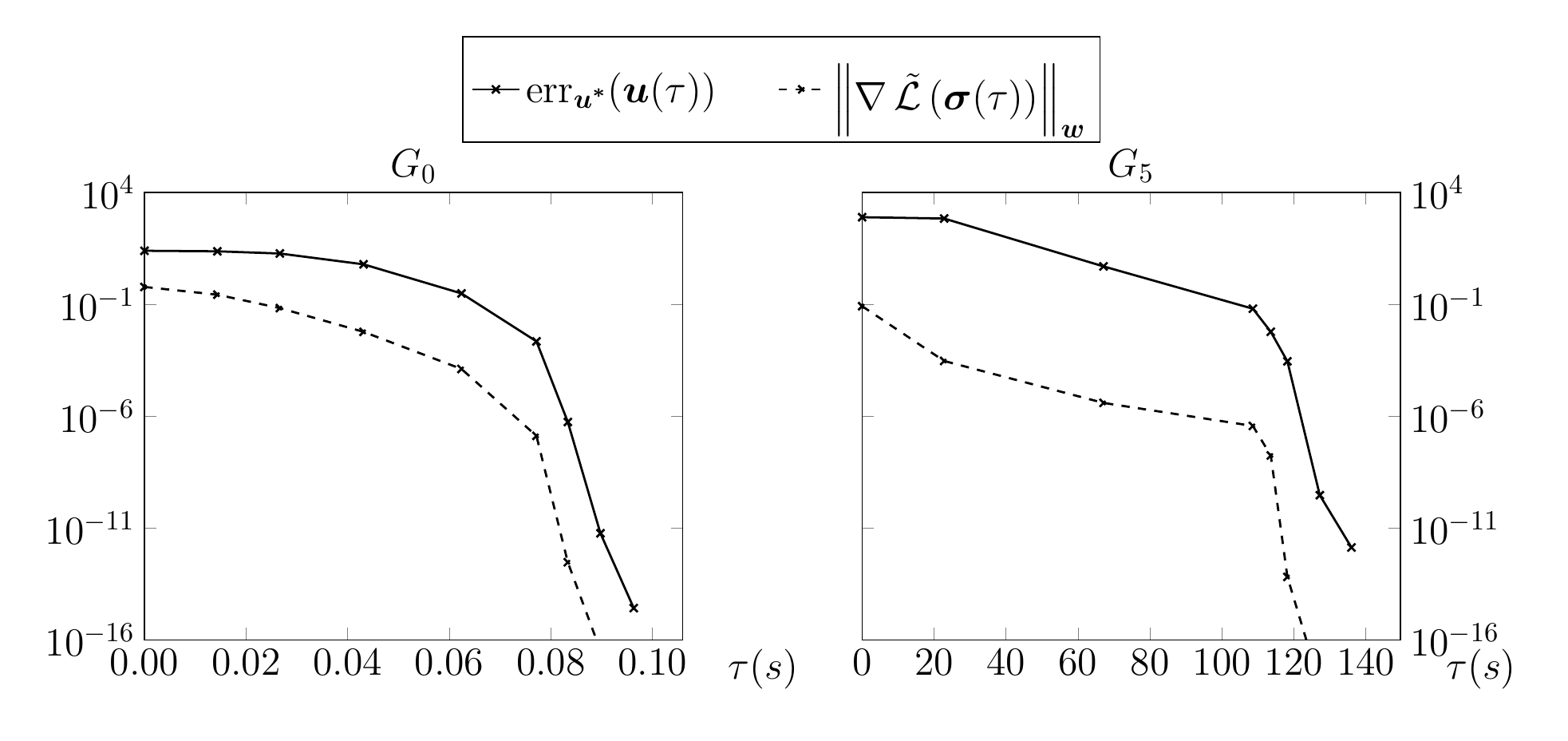}
  }
  \caption{ Plot of the $\Vect{\Opt{\Pot}}$-error (given in
    \cref{eq:err-pot}) and the norm of $\Grad
    \Ingf{\Lyap}(\Vect{\Gfvar})$ (used to estimate convergence toward
    steady state) against the CPU time for Test-Case-1, for $\Graph_0$
    (left) and $\Graph_5$ (right).  }
  \label{fig:eik-cpu-errors}
\end{figure}
In \cref{fig:eik-cpu-errors} we show the evolution of the relative
error with respect to the optimal potential $\Vect{\Opt{\Pot}}$ and
norm of the gradient of $\Ingf{\Lyap}(\Vect{\Gfvar})$ against the CPU
time, measured in seconds. Only the results for the smallest and
largest graphs ($\Graph_0$ and $\Graph_5$) are reported. The plots
show how most of the computational effort is spent during the initial
steps, and then, when
$\Norm[\Vect{\Length}]{\Grad\Ingf{\Lyap}(\Vect{\Gfvar})}$ reaches the
values $10^{-5}-10^{-7}$, the convergence accelerates since larger
time size steps are used, thus the time relaxation in~\cref{eq:gf-inf}
is so small that the GF approach becomes practically the pure Newton
method applied to~\cref{eq:bp}.
the 
\subsection{Test-case~2. Selection of the active sub-graph}
\label{sec:test-case-2}
The second test case turns out to be a more challenging problem for the
GF approach.  In fact, using the Reduced-MG approach, the efficiency
is strongly influenced by the evolution of the vector
\[
(\Mgrad \Vect{\Pot})^2-\Vect{1}
\; ,
\]
appearing in the definition of the matrix
$\Matr{\Cblock}=\Diag{\Vect{1}/\Deltat-((\Mgrad
  \Vect{\Pot})^2-\Vect{1})}$ in \cref{eq:jacobian-gf}. When
$\max((\Mgrad \Vect{\Pot})^2)<1$ or $\max((\Mgrad
\Vect{\Pot})^2)\approx 1$ a larger time step size $\Deltat[\tstep]$
and $\Damp=1$ (the un-damped Newton algorithm) can be used. On the
other hand, if $\max((\Mgrad \Vect{\Pot})^2)>1$, more time and Newton
steps are required to reach steady state. However, we note that most
of the edges where $\max((\Mgrad \Vect{\Pot})^2)>>1$ are those with
conductivity $\Vect{\Tdens}$ going to zero. Thus, the rows of the
matrix $\Lapl[\Vect{\Cond}]$ corresponding to the nodes that belong to
these edges became almost null.  Oscillation of the potential
$\Vect{\Pot}$ and thus in the gradient vector $\Mgrad \Vect{\Pot}$,
may appear when we get closer to the optimal solution
$\Vect{\Opt{\Gfvar}}=\Trans^{-1}(\Vect{\Opt{\Cond}})$, which contains
several zeros entries.

A simple strategy to cope with this problem is to divide the graph
$\Graph$ in two sub-graphs, $\Graph_{\On}=(\Nodes_{\On},\Edges_{\On})$
and $\Graph_{\Off}=(\Nodes_{\Off},\Edges_{\Off})$.  The first,
$\Graph_{\On}$, contains only those edges with conductivity
$\Vect{\Cond}$ above a fixed threshold $\TolCut>0$, and the nodes
connected by those edges. The second, $\Graph_{\Off}$, contains the
remaining edges and nodes. The time evolution is restricted to the
graph $\Graph_{\On}$. On the graph $\Graph_{\Off}$ the conductivity
$\Vect{\Cond}$ is set to zero on all edges in $\Edges_{\Off}$, while
the potential $\Vect{\Pot}$ restricted to node in $\Nodes_{\Off}$ is
fixed at the value given at the time of the selection. This
strategy closely resembles the Heuristic Optimality Check approach
considered in~\cite{Lorenz2015} for the solution of the Basis Pursuit
Problem. It may also be seen as an application of the Active Set
method in optimization~\cite{Zhang2012}.  On the other hand, from an
algebraic perspective, we are trying to identify and remove the
near-null vectors of matrix $\Lapl[\Vect{\Cond}]$.

\begin{table}
  \centering
  \begin {tabular}{r|ccc|rcc|cc}%
\toprule \multicolumn {1}{c|}{} &\multicolumn {3}{c|}{\textbf {Iterations}} &\multicolumn {3}{c@{\hspace {0.5em}}|}{\textbf {CPU}} &\multicolumn {2}{c}{\textbf {Errors}} \\\textbf {$\Graph $}&\textbf {T}&\textbf {N}&\textbf {AGMG}&\textbf {(s)}&\textbf {L,\!B,\!W\!(\%)}&\textbf {\!LP(s)\!}&$\left |\Norm [\infty ]{\Mgrad \Vect {\MyOpt {\Pot }}}-\Vect {1}\right |$&$\ErrTdens (\Vect {\MyOpt {\Cond }})$\\\midrule %
$\Graph _0$&\pgfutilensuremath {10}&\pgfutilensuremath {29}&\pgfutilensuremath {415}&\pgfutilensuremath {0.11}&(80-09-11)&\pgfutilensuremath {0.06}&2.3e-13&3.5e-11\\%
&\pgfutilensuremath {10}&\pgfutilensuremath {31}&\pgfutilensuremath {361}&\pgfutilensuremath {0.12}&(74-10-16)&&4.0e-14&8.4e-12\\\bottomrule %
$\Graph _1$&\pgfutilensuremath {12}&\pgfutilensuremath {36}&\pgfutilensuremath {539}&\pgfutilensuremath {0.39}&(83-06-11)&\pgfutilensuremath {0.54}&1.9e-10&1.8e-11\\%
&\pgfutilensuremath {10}&\pgfutilensuremath {38}&\pgfutilensuremath {523}&\pgfutilensuremath {0.39}&(80-08-12)&&1.0e-10&4.8e-13\\\bottomrule %
$\Graph _2$&\pgfutilensuremath {11}&\pgfutilensuremath {45}&\pgfutilensuremath {825}&\pgfutilensuremath {1.79}&(90-04-06)&\pgfutilensuremath {8.77}&2.1e-13&4.6e-12\\%
&\pgfutilensuremath {11}&\pgfutilensuremath {56}&\pgfutilensuremath {961}&\pgfutilensuremath {2.78}&(87-07-06)&&1.3e-11&2.5e-11\\\bottomrule %
$\Graph _3$&\pgfutilensuremath {11}&\pgfutilensuremath {56}&\pgfutilensuremath {1{,}166}&\pgfutilensuremath {14.4}&(93-03-04)&\pgfutilensuremath {189.74}&1.4e-10&2.0e-11\\%
&\pgfutilensuremath {11}&\pgfutilensuremath {65}&\pgfutilensuremath {1{,}323}&\pgfutilensuremath {15.3}&(89-06-05)&&1.3e-16&1.9e-12\\\bottomrule %
$\Graph _4$&\pgfutilensuremath {22}&\pgfutilensuremath {121}&\pgfutilensuremath {2{,}530}&\pgfutilensuremath {144}&(75-03-22)&\pgfutilensuremath {6{,}287}&3.3e-11&1.8e-11\\%
&\pgfutilensuremath {21}&\pgfutilensuremath {131}&\pgfutilensuremath {2{,}568}&\pgfutilensuremath {141}&(71-05-24)&&2.3e-16&9.2e-13\\\bottomrule %
$\Graph _5$&\pgfutilensuremath {\textbf{101}}&\pgfutilensuremath {329}&\pgfutilensuremath {5{,}312}&\pgfutilensuremath {1{,}230}&(83-04-13)&$\dagger$&$\dagger$&$\dagger$\\%
&\pgfutilensuremath {37}&\pgfutilensuremath {242}&\pgfutilensuremath {4{,}765}&\pgfutilensuremath {888}&(76-06-18)&&1.0e-15&8.9e-14\\\bottomrule %
\end {tabular}%

  \caption{ Numerical Results for Test-case~2. Each row corresponds to
    a different graph. Number of nodes/edges approximately scales by a
    factor of 4 between different graphs (exact numbers are reported
    in~\cref{tab:eik}). For each graph, the first row reports the data
    where the edge selection is not active, the second when the
    selection is active.  Starting from the second column we report:
    the number of time steps required to reach steady state with
    $\TolOpt=10^{-14}$ (\textbf{T}); the total number of Newton steps,
    which coincides with the linear system solved (\textbf{N}); the
    total number of AGMG iterations (\textbf{AGMG}); the CPU time in
    seconds, with the percentages of time spent in solving linear
    systems (\textbf{L}), building the matrices (\textbf{B}), and
    wasted by linear and non-linear solvers (\textbf{W}); the CPU time
    in seconds required to solve the problem via the Matlab linear
    programming solver (\textbf{LP}). The last columns reports the
    error of constraints of the dual problem and the relative
    $\Vect{\Tdens}$-error.  Convergence is not achieved for graph
    $\Graph_5$ when no selection is active (case marked with
    $\dagger$). In this case the Matlab Linear Programming solver did
    not find a solution within the 8 hours prescribed limit (again
    denoted by $\dagger$).}
  \label{tab:rect}
\end{table}
We summarize in~\cref{tab:rect} the numerical results for Test-case~2,
with and without the edge selection.  While for the smallest graphs
the differences are relatively small, for the largest one, not using
the selection procedure leads to failure of the convergence of the
scheme, because edges outside the ``support'' of the optimal solution
$\Vect{\Opt{\Cond}}$, start having $\ABS{(\Mgrad \Vect{\Pot})^2}>>1$.
By the restriction imposed in~\cref{eq:tol-C}, this forces us to
shrink the time step size $\Deltat[\tstep]$ below the value $2$, where
we can use the GD and the AGD approaches at a lower computational
cost. As already mentioned in~\cref{sec:gd-agd-gf}, the GD and the AGD
approaches can represent robust alternatives to the GF approach, since
in both cases the gradient direction is given by the product of the
$\Vect{\Gfvar}^{\tstep}$ and $\ABS{(\Mgrad
  \Vect{\Pot}^{\tstep})}^2-\Vect{1}$, thus on the edges where
$\Vect{\Gfvar}^{\tstep}$ tends to zero, the first factor annihilates
oscillation and errors in the second.  However, the selection
procedure removes the edges with strong gradient oscillations, thus a
larger time step size can be used, hence convergence toward the
approximate steady state is achieved in fewer time steps.  Moreover,
it reduces the overall computational cost since $\Graph_{\On}$
contains fewer edges and nodes. The drawback of this approach is that
it requires to tune the parameter $\TolCut$.  We found experimentally
that $\TolCut=10^{-9}$ is a suitable threshold for the test-cases
considered.  In all simulations we obtain highly accurate
approximations of the optimal solutions and the constraints on the
vector $\Mgrad \Vect{\Pot}$.

\begin{figure}
  \centerline{
    \includegraphics[width=1\textwidth,
    trim={0.0cm 0cm 0.0cm 0.0cm},clip
    ]{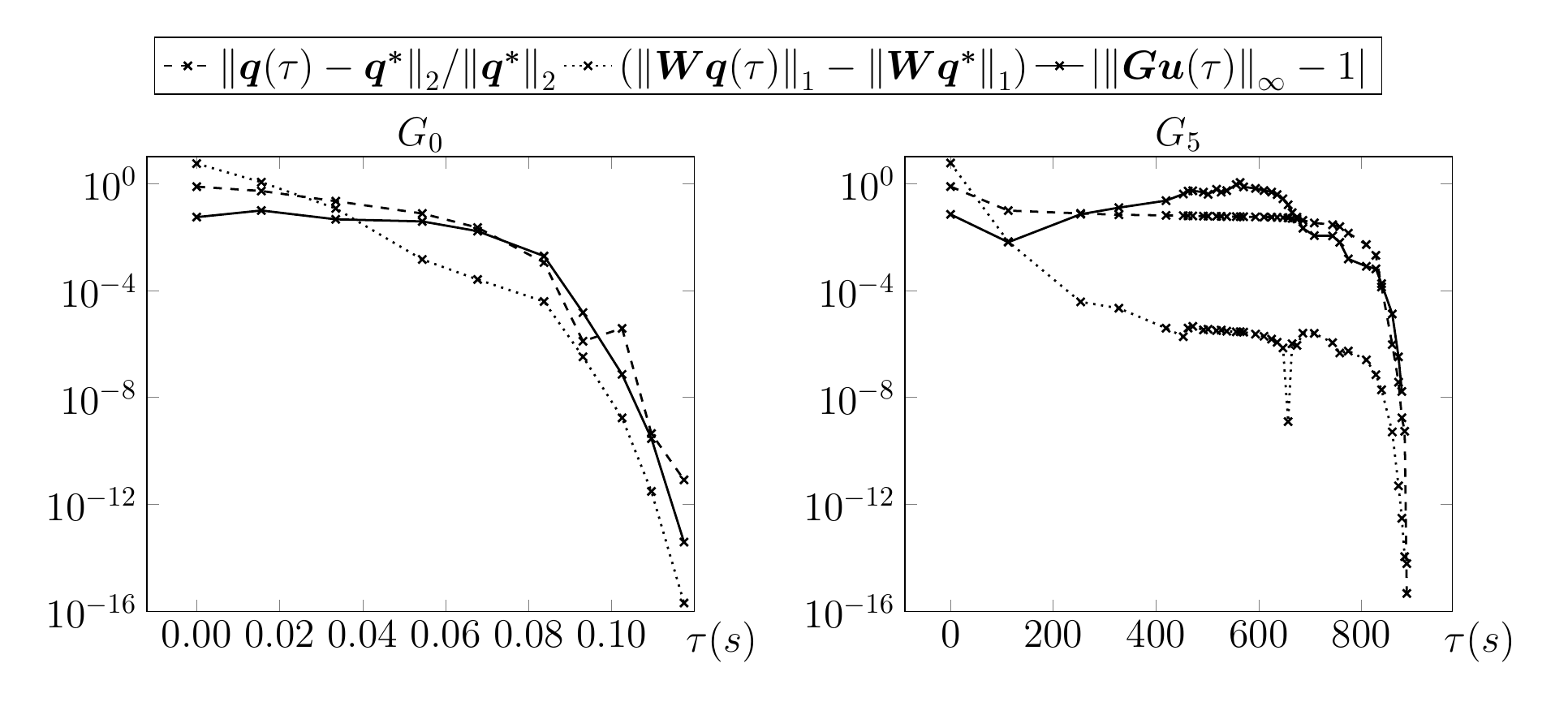}
  }
  \caption{ Plot of the relative error with respect to the optimal
    flux $\Vect{\Opt{\Dflux}}$, the error in the value of the
    minimization problem, and the error of the constraints
    of~\cref{eq:dual}. We report the data against the CPU time $\tau$,
    only for Test-Case-2 with $\Graph_0$ (left) and $\Graph_5$
    (right).  }
  \label{fig:rect-cpu-errors}
\end{figure}
In order to show the accuracy and the efficiency of the GF approach
along the whole optimization process, we present
in~\cref{fig:rect-cpu-errors} the evolution of three different errors
against the CPU time.  In particular, we report the relative error
with respect to the optimal solution $\Vect{\Opt{\Dflux}}$, the error
of the cost functional in~\cref{eq:beckmann}, which estimates the
error in computation of the Wasserstein distance, and the error of the
constraint of~\cref{eq:dual}. We report only the results for the
smallest and the largest graph, $\Graph_0$ and $\Graph_5$, using the
Reduced-MG approach with selection of the active edges with
$\TolCut=10^{-9}$. \Cref{fig:rect-cpu-errors} shows how having a good
approximation of the value of \cref{eq:beckmann} does not guarantee
having a good solution of the problem, since ~\cref{eq:beckmann} is
convex but not strictly.  Similar to Test-case~1, most of the
computational effort is spent during the initial phase where the
errors are reduced approximately to $10^{-4}$. Then the convergence
accelerates and the optimal solutions of~\cref{eq:beckmann,eq:dual}
are found in few time steps.  On the left panel
of~\cref{fig:rect-cpu-errors} , which shows the results for
$\Graph_0$, we can see that the $\ell^{\infty}$-norm of $\Mgrad
\Vect{\Pot}(\tau)$ converges rapidly to $1$, thus large time step
sizes are used, and thus few time steps are required to reach steady
state.  On the right panel, which shows the results for $\Graph_5$, we
can see that when the $\ell^{\infty}$-norm of the gradient vector
$\Mgrad \Vect{\Pot}(\tau)$ exceeds 1 we are forced to shrink the time
step size, resulting in more but less costly time steps.

\subsection{Test-case~3}
In this section we summarize the numerical results obtained for the
graphs of the Test-case~3.  In this case, the LAMG solver turns out to
be more efficient and robust than the AGMG solver, which does not
converge within the maximum number of multigrid iterations (fixed
at 100) when we consider the largest graphs.  In all simulations, we
started from $\Vect{\Gfvar}^0=\Trans^{-1}(\Vect{1})$, with the edge selection
strategy active.
\begin{table}
  \begin {tabular}{c|cc|ccc|r<{\pgfplotstableresetcolortbloverhangright }@{}l<{\pgfplotstableresetcolortbloverhangleft }|rrrr}%
\toprule \multicolumn {1}{c|}{\textbf {TC}} & \multicolumn {2}{c|}{\textbf {Graph}} & \multicolumn {3}{c|}{\textbf {Iterations}} & \multicolumn {2}{c|}{\textbf {CPU}} & \multicolumn {2}{c}{\textbf {$\left |\Norm [\infty ]{\Mgrad \Vect { \MyOpt {\Pot }}}-\Vect {1}\right |$}} & \multicolumn {2}{c}{\textbf {$\Norm [2]{\Matr {\Diff }\Vect {\MyOpt {\Dflux }}-\Vect {\Forcing }}/\Norm [2]{\Vect {\Forcing }}$}} \\&\textbf {$|\Nodes |$}&\textbf {$|\Edges |$}&\textbf {T}&\textbf {N}&\textbf {LAMG}&\multicolumn {2}{c|}{\textbf {(s)}}&min&max&min&max\\\midrule %
\textbf {WS}&1e3&2e3&\pgfutilensuremath {11}&\pgfutilensuremath {58}&\pgfutilensuremath {494}&$3$&$.6$&1.4e-13&4.4e-9&3.3e-11&1.2e-9\\%
10\%&1e4&2e4&\pgfutilensuremath {23}&\pgfutilensuremath {147}&\pgfutilensuremath {1{,}665}&$24$&$.1$&1.7e-12&1.4e-8&2.6e-11&3.9e-9\\%
&1e5&2e5&\pgfutilensuremath {46}&\pgfutilensuremath {324}&\pgfutilensuremath {4{,}791}&$387$&$.6$&3.7e-11&7.6e-7&1.7e-14&2.9e-9\\%
&1e6&2e6&\pgfutilensuremath {86}&\pgfutilensuremath {670}&\pgfutilensuremath {12{,}717}&$8{,}844$&$.4$&1.8e-10&1.7e-6&1.0e-10&7.4e-9\\%
\bottomrule \textbf {WS}&1e3&2e3&\pgfutilensuremath {14}&\pgfutilensuremath {70}&\pgfutilensuremath {537}&$4$&$$&6.2e-12&2.7e-8&4.2e-12&8.4e-9\\%
100\%&1e4&2e4&\pgfutilensuremath {34}&\pgfutilensuremath {202}&\pgfutilensuremath {1{,}979}&$32$&$.1$&6.2e-11&5.8e-8&6.6e-12&1.3e-9\\%
&1e5&2e5&\pgfutilensuremath {73}&\pgfutilensuremath {441}&\pgfutilensuremath {5{,}602}&$490$&$.4$&2.5e-12&6.7e-8&3.1e-11&8.8e-10\\%
&1e6&2e6&\pgfutilensuremath {140}&\pgfutilensuremath {882}&\pgfutilensuremath {13{,}362}&$12{,}910$&$$&4.3e-9&2.2e-6&3.9e-10&3.3e-9\\%
\bottomrule \textbf {ER}&1e2&1e3&\pgfutilensuremath {8}&\pgfutilensuremath {35}&\pgfutilensuremath {71}&$6$&$$&5.6e-15&2.5e-7&9.3e-16&8.2e-9\\%
10\%&1e3&1e4&\pgfutilensuremath {17}&\pgfutilensuremath {99}&\pgfutilensuremath {857}&$9$&$.1$&9.5e-12&3.7e-8&5.0e-13&7.5e-10\\%
&1e4&1e5&\pgfutilensuremath {41}&\pgfutilensuremath {240}&\pgfutilensuremath {2{,}680}&$101$&$$&3e-11&5.1e-7&2.3e-11&9.1e-9\\%
&1e5&1e6&\pgfutilensuremath {51}&\pgfutilensuremath {325}&\pgfutilensuremath {4{,}559}&$1{,}783$&$$&7.3e-10&1.3e-6&1.5e-12&5.4e-9\\%
\bottomrule \textbf {ER}&1e2&1e3&\pgfutilensuremath {9}&\pgfutilensuremath {43}&\pgfutilensuremath {85}&$2$&$.4$&5.1e-13&4.4e-9&4.0e-11&3.3e-9\\%
100\%&1e3&1e4&\pgfutilensuremath {19}&\pgfutilensuremath {107}&\pgfutilensuremath {900}&$8$&$.5$&1.3e-10&3.8e-8&1.4e-11&4.9e-9\\%
&1e4&1e5&\pgfutilensuremath {37}&\pgfutilensuremath {227}&\pgfutilensuremath {2{,}631}&$102$&$.2$&1.5e-11&6.5e-8&2.4e-12&5.3e-9\\%
&1e5&1e6&\pgfutilensuremath {81}&\pgfutilensuremath {498}&\pgfutilensuremath {7{,}319}&$2{,}215$&$$&6.4e-10&1.7e-5&5.5e-12&6.7e-9\\%
\bottomrule \textbf {BA}&1e3&4e3&\pgfutilensuremath {13}&\pgfutilensuremath {69}&\pgfutilensuremath {501}&$5$&$$&1.9e-14&3.6e-8&8.0e-12&8.0e-9\\%
10\%&1e4&4e4&\pgfutilensuremath {26}&\pgfutilensuremath {149}&\pgfutilensuremath {1{,}453}&$45$&$.5$&1.2e-13&3.3e-9&7.4e-14&4.0e-9\\%
&1e5&4e5&\pgfutilensuremath {58}&\pgfutilensuremath {371}&\pgfutilensuremath {4{,}868}&$769$&$$&2.3e-11&8.3e-8&1.3e-11&4.3e-9\\%
&1e6&4e6&\pgfutilensuremath {108}&\pgfutilensuremath {724}&\pgfutilensuremath {11{,}638}&$20{,}920$&$$&2.1e-10&5.2e-7&3.2e-11&2.6e-9\\%
\bottomrule \textbf {BA}&1e3&4e3&\pgfutilensuremath {18}&\pgfutilensuremath {98}&\pgfutilensuremath {717}&$6$&$.5$&3.8e-15&6.3e-9&1.5e-14&6.4e-9\\%
100\%&1e4&4e4&\pgfutilensuremath {36}&\pgfutilensuremath {214}&\pgfutilensuremath {2{,}082}&$56$&$.7$&6.7e-12&1.4e-8&4.5e-13&3.1e-9\\%
&1e5&4e5&\pgfutilensuremath {68}&\pgfutilensuremath {425}&\pgfutilensuremath {5{,}536}&$911$&$.8$&5.5e-9&2.4e-7&1.0e-11&7.8e-9\\%
&1e6&4e6&\pgfutilensuremath {133}&\pgfutilensuremath {860}&\pgfutilensuremath {13{,}914}&$30{,}160$&$$&1.1e-8&3e-7&4.6e-12&5.9e-9\\\bottomrule %
\end {tabular}%

  \caption{ Results for Test-case~3. The first column describes the
    test case (\textbf{TC}) considered, distinguishing among the
    Erd\"os-R\'enyi (\textbf{ER}), Watts-Strogatz (\textbf{WS}), and
    the Barabasi-Albert (\textbf{BA}) graphs. It also indicates the
    percentage of non-zero elements of $\Vect{\Forcing}$, $10\%$ or
    $100\%$. The second and third columns describe the dimensions of
    the graphs considered. From left to right, we report: the total
    number of time steps required to achieve convergence
    (\textbf{T}), the total number of Newton steps (\textbf{N}), the
    total number of LAMG iterations performed (\textbf{LAMG}), and the
    CPU-time in seconds required to achieve convergence.  The results
    reported are the average of 10 different graphs with similar
    dimensions.  The last columns reports the minimum and the maximum
    of the error in the constraint of \cref{eq:dual,eq:beckmann}. In
    all simulation the duality gap in~\cref{eq:duality-gap} is close
    to machine precision.  }
  \label{tab:ws}
\end{table}
We summarize in~\cref{tab:ws} the averaged results for the different
groups of problems of Test-case~3.  We observed an increased number of
time steps and Newton steps with the size of the graph, with
  the later scaling, approximately, between
$\mathcal{O}(\Nedge^{0.31})$ and $\mathcal{O}(\Nedge^{0.36})$. This
result compares well with the theoretical upper bound presented
in~\cite{Madry:2013}, which estimates the cost of
solving~\cref{eq:beckmann} on bipartite graphs with the solution of
$\mathcal{O}(\Nedge^{3/7\approx 0.428})$ Laplacian systems.
Unfortunately, as we already observed in Test case~1 and~2, the number
of multigrid iterations per linear system increases with the size of
the graphs. Nevertheless, the resulting total CPU time scales certainly
better than the $\mathcal{O}(\Nnode^2)$-time for solving the
$\Lspace{1}$-\OTP\ described in \cite{Ling-et-al:2007}.

\subsection{Parameter tuning}
\label{sec:parameters}
In this section we want to summarize the crucial role played by the
different parameters introduced above.  In particular the non-linear
tolerance $\TolNonLinear$, the linear solver precision
$\TolLinearNewton$, the bound $\TolC$ in~\cref{eq:tol-C}, and the
lower bound $\MinDamp$ for the damping parameter $\Damp$.  These
parameters play competing roles in the accuracy and the performance of
the algorithm. For example, the value of $\TolC$ influences the time
step size $\Deltat[\tstep]$. Typically, taking smaller time steps
results in a higher number of time steps, but less non-linear
iterations, each one requiring the solution of the linear system
in~\cref{eq:newton}. Smaller time steps size also result in fewer
failures of the linear and non-linear solvers. A second example is
given by the lower bound $\MinDamp$ imposed to the damping parameter
$\Damp$. Smaller values of $\MinDamp$ results in fewer failures of the
non-linear solver, but an increased number of damped Newton steps. In
certain cases, it may be more convenient to perform two time steps
with $\Deltat$ rather than one single time step with $2\Deltat$.

Finding a good combination among all these parameters, in order to
obtain fast and accurate solution of the \OTP\ on
graphs, is not a trivial task.  In~\cref{tab:parameters} we summarize
a combination of parameters that we found experimentally to give a
good trade-off between accuracy and efficiency in finding the solution
of the \OTP\ on graphs.
\begin{table}
  \begin{center}
    \begin{tabular}{l|r} 
      \textbf{Parameter} & \textbf{Value } \\
      \hline
      $\TolNonLinear$ (non-linear tolerance in~\cref{eq:non-linear}) & $10^{-8}$\\
      $\TolLinearNewton$ (linear solver tolerance in~\cref{eq:relative-res}) & $10^{-4}$ \\
      $\TolC$ ($\Deltat$ tuning in~\cref{eq:tol-C}) & $10^{-8}$ \\
      $\MinDamp$ (lower bound for $\Damp$) & $5\times10^{-2}$ \\
      $\TolOpt$ (convergence criterion in \cref{eq:steady-state}) & $10^{-12}$ \\
      $\TolCut$ (selection threshold ) & $10^{-9}$ \\
      $\Rmax$ (max. Newton steps ) & 30 \\
      \hline
    \end{tabular}
    \caption{Parameters used in most of the numerical experiments.}
    \label{tab:parameters}    
  \end{center}
\end{table}

\section{Conclusions}
In this paper we have presented a novel gradient descent
approach for the numerical solution of the $\Lspace{1}$-OTP on
graphs. We have also presented different numerical approaches based
on this gradient descent dynamics.
The GF method with the Reduced-MG
approach is found to be an accurate, efficient, and robust solver for
the $\Lspace{1}$-OTP on graphs.  Different types of graphs have been
used, with different forcing terms.  The numerical experiments show
that the number of time steps with the GF-Reduced-MG approach ranges
between being constant and scaling proportionally to
$\mathcal{O}(\Nedge^{0.36})$ depending on the type of graph $\Graph$
and the forcing term $\Vect{\Forcing}$. We observed the same scaling
in the total number of Newton steps, which coincides with the number
of linear systems with weighted Laplacian matrix we need to solve.
The selection of active edges/nodes described
in~\cref{sec:test-case-2} drastically affects the efficiency of the
GF-Reduced-MG approach, even if it requires tuning a parameter that is
problem-dependent.

The proposed approach is iterative, thus any approximate solution can
be used as initial data for the approximation process.  Thus it may be
integrated with a fast but inaccurate algorithm
for~\cref{eq:beckmann}, since most of the computation effort is spent
in the initial steps, while closer to the optimum the convergence of
the GF accelerates.  This can be accomplished by introducing a
multi-level version of the algorithm, like in~\cite{behata:2010},
where an initial solution is obtained interpolating an approximate
solution computed on a coarser graph. The coarsening/prolongation
strategy used by the AGMG and the LAMG softwares should provide an
excellent tool to this aim. Parallelization of the multigrid linear
solver is also a future improvement that we want to introduce in our
method, in order to tackle problems on large-scale graphs. Depending
on the graph structure, optimal scalability of the multigrid solver
(which accounts for almost all the computation effort) can be
achieved.

A second line of research should be aimed in making more
robust and less problem-dependent the algorithm. In particular, the
selection of the active-inactive portion of the graph described
in~\cref{sec:test-case-2} may be replaced by the identification, via
algebraic methods, of the near-null space of the weighted Laplacian
matrices, similar to the approach proposed
in~\cite{Bergamaschi-et-al:2019}. Last, a deeper theoretical
investigation of the proposed time-stepping scheme may help in
defining new and more robust strategies to tune the parameters,
summarized in~\cref{tab:parameters}, that govern our algorithm.

\bibliographystyle{unsrt}
\bibliography{Strings,biblio_abbr}

\end{document}